\documentclass[reqno]{amsart}
\usepackage{hyperref}
\usepackage{amssymb}

\allowdisplaybreaks

\theoremstyle{plain}
\newtheorem{theorem}{Theorem}[section]

\newtheorem{lemma}[theorem]{Lemma}

\theoremstyle{definition}
\newtheorem{definition}[theorem]{Definition}

\theoremstyle{remark}
\newtheorem{remark}[theorem]{Remark}
\numberwithin{equation}{section}
\begin{document}
\title[Parabolic equation with nonlinear nonlocal boundary condition]
{Global existence and blow-up of solutions for semilinear heat
equation with nonlinear nonlocal boundary condition}

\author[A. Gladkov]{Alexander Gladkov}
\address{Alexander Gladkov \\ Department of Mechanics and Mathematics
\\ Belarusian State University \\ Nezavisimosti Avenue 4, 220030
Minsk, Belarus} \email{gladkoval@mail.ru}

\author[T. Kavitova]{Tatiana Kavitova}
\address{Tatiana Kavitova \\ Department of Mathematics \\ Vitebsk State University \\ Moskovskii pr. 33, 210038
Vitebsk, Belarus} \email{kavitovatv@tut.by}

\subjclass{Primary 35B44, 35K58, 35K61} \keywords{semilinear heat
equation, nonlocal boundary condition, blow-up}

\begin{abstract}
In this paper we consider a semilinear parabolic equation with
nonlinear and nonlocal boundary condition and nonnegative initial
datum. We prove some global existence results. Criteria on this
problem which determine whether the solutions blow up in finite
time for large or for all nontrivial initial data are also given.
\end{abstract}

\maketitle

\section{Introduction}\label{in}
In this paper we consider the initial boundary value problem for
the following semilinear parabolic equation
\begin{equation}\label{v:u}
    u_t=\Delta u+c(x,t)u^p,\;x\in\Omega,\;t>0,
\end{equation}
with nonlinear nonlocal boundary condition
\begin{equation}\label{v:g}
\frac{\partial u(x,t)}{\partial\nu}=\int_{\Omega}{k(x,y,t)u^l(y,t)}\,dy,\;x\in\partial\Omega,\;t>0,
\end{equation}
and initial datum
\begin{equation}\label{v:n}
    u(x,0)=u_{0}(x),\; x\in\Omega,
\end{equation}
where $p>0,\,l>0$, $\Omega$ is a bounded domain in $\mathbb{R}^n$
for $n\geq1$ with smooth boundary $\partial\Omega$, $\nu$ is unit
outward normal on $\partial\Omega.$

Throughout this paper we suppose that the functions
$c(x,t),\;k(x,y,t)$ and $u_0(x)$ satisfy the following conditions:
\begin{equation*}
c(x,t)\in
C^{\alpha}_{loc}(\overline{\Omega}\times[0,+\infty)),\;0<\alpha<1,\;c(x,t)\geq0;
\end{equation*}
\begin{equation*}
k(x, y, t)\in
C(\partial\Omega\times\overline{\Omega}\times[0,+\infty)),\;k(x,y,t)\geq0;
\end{equation*}
\begin{equation*}
u_0(x)\in C^1(\overline{\Omega}),\;u_0(x)\geq0 \textrm{ in }
\Omega,  \;\frac{\partial u_0(x)}{\partial\nu}=\int_{\Omega}{k(x,
y,0)u_0^l(y)}\,dy \textrm{ on } \partial\Omega.
\end{equation*}
Many authors have studied blow-up problem for parabolic equations
and systems with nonlocal boundary conditions (see, for
example, \cite{CY,CYZ,Deng_Dong,FZ,GG,GladkovGuedda,GladkovKim,Liu,Pao,WMX,YF,YX,Y,ZK}
and the references therein). In particular, the initial boundary
value problem for equation~(\ref{v:u}) with nonlinear nonlocal
boundary condition
\begin{equation*}
u(x,t)=\int_{\Omega}k(x,y,t)u^l(y,t)\,dy,\;x\in\partial\Omega,\;t>0,
\end{equation*}
was considered for $c(x,t)\leq0$ and $c(x,t)\geq0$
in~\cite{GladkovGuedda} and~\cite{GladkovKim} respectively.

Local existence theorem, comparison principle, the uniqueness and
nonuniqueness of solution for problem~(\ref{v:u})--(\ref{v:n})
have been considered in~\cite{GladkovKavitova}.

In this paper we obtain necessary and sufficient conditions for
the existence of global solutions as well as for a blow-up of
solutions in finite time for problem~(\ref{v:u})--(\ref{v:n}). Our
global existence and blow-up results depend on the behavior of the
functions $c(x,t)$ and $k(x,y,t)$ as $t \to \infty.$

This paper is organized as follows. In the next section we show
that all nonnegative solutions are global for $\max(p,l) \leq 1.$
In Section~\ref{mm} we prove blow-up of solutions for
large and for all nontrivial initial data as well as global
existence of solutions for small initial data.
Finally, in Section~\ref{blow} we establish that if $p\leq1$ and
$l>1$ blow-up can occur only on the boundary.

\section{Global existence}\label{gl}

Let $Q_T=\Omega\times(0,T),\;S_T=\partial\Omega\times(0,T)$,
$\Gamma_T=S_T\cup\overline\Omega\times\{0\}$, $T>0$.
\begin{definition}\label{v:sup}
We say that a nonnegative function $u(x,t)\in C^{2,1}(Q_T)\cap
C^{1,0}(Q_T\cup\Gamma_T)$ is a supersolution
of~(\ref{v:u})--(\ref{v:n}) in $Q_{T}$ if
        \begin{equation}\label{v:sup^u}
        u_{t}\geq\Delta u+c(x, t)u^{p},\;(x,t)\in Q_T,
        \end{equation}
        \begin{equation}\label{v:sup^g}
        \frac{\partial u(x,t)}{\partial\nu}\geq\int_{\Omega}{k(x, y, t)u^l(y, t) }\,dy,\;(x,t)\in S_T,
        \end{equation}
        \begin{equation}\label{v:sup^n}
            u(x,0)\geq u_{0}(x),\; x\in\Omega,
        \end{equation}
     and $u(x,t)\in C^{2,1}(Q_T)\cap C^{1,0}(Q_T\cup\Gamma_T)$ is a subsolution of~(\ref{v:u})--(\ref{v:n}) in $Q_{T}$ if $u\geq0$ and it satisfies~(\ref{v:sup^u})--(\ref{v:sup^n}) in the reverse order. We say that $u(x,t)$ is a solution of problem~(\ref{v:u})--(\ref{v:n}) in $Q_T$ if $u(x,t)$ is both a subsolution and a supersolution of~(\ref{v:u})--(\ref{v:n}) in $Q_{T}$.
\end{definition}

To prove the main results we use the positiveness of solution and
the comparison principle which have been proved
in~\cite{GladkovKavitova}.
\begin{theorem}\label{p:theorem:positive}
Suppose that $u_0\not\equiv0$ in $\Omega$ and $u(x,t)$ is a
solution of~(\ref{v:u})--(\ref{v:n}) in $Q_{T}$. Then $u(x,t)>0$
in $Q_T\cup S_T$.
\end{theorem}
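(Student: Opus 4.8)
The plan is to establish strict positivity of the solution $u(x,t)$ in $Q_T \cup S_T$ by applying the strong maximum principle and Hopf's lemma to a linearized version of the problem. Since $u \geq 0$ is given and $c(x,t) \geq 0$, $k(x,y,t) \geq 0$, I would first rewrite the equation~(\ref{v:u}) in a form to which the parabolic strong maximum principle applies. Because $u$ solves $u_t = \Delta u + c(x,t)u^p$ with $u \geq 0$, I can write $c(x,t)u^p = \bigl(c(x,t)u^{p-1}\bigr)u$ wherever $u > 0$; more carefully, since $c \geq 0$ and $u \geq 0$, the term $c(x,t)u^p \geq 0$ acts as a nonnegative source, so $u$ satisfies the differential inequality
\begin{equation*}
u_t - \Delta u = c(x,t)u^p \geq 0, \quad (x,t) \in Q_T.
\end{equation*}
Thus $u$ is a supersolution of the homogeneous heat equation $w_t = \Delta w$ inside $Q_T$.

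Next I would invoke the strong maximum principle. Since $u_0 \not\equiv 0$ and $u_0 \geq 0$, the initial datum is positive on a set of positive measure. The strong maximum principle for the inequality $u_t - \Delta u \geq 0$ then forces $u(x,t) > 0$ for all $x \in \Omega$ and $t \in (0,T)$: if $u$ vanished at some interior point $(x_0,t_0)$, it would have to vanish identically on $\Omega \times (0,t_0]$ by the strong minimum principle (applied to the nonnegative supersolution), contradicting $u_0 \not\equiv 0$. This handles interior positivity, i.e. positivity in $Q_T$.

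The main obstacle, and the step requiring the boundary condition, is establishing $u(x,t) > 0$ for $x \in \partial\Omega$, that is on $S_T$. Here I would argue by contradiction using Hopf's lemma. Suppose $u(x_0,t_0) = 0$ for some $(x_0,t_0) \in S_T$. Since $u \geq 0$ attains an interior-type minimum value $0$ at the boundary point $x_0$ while being strictly positive inside $\Omega$ by the previous step, Hopf's lemma yields the strict inequality
\begin{equation*}
\frac{\partial u(x_0,t_0)}{\partial\nu} < 0.
\end{equation*}
On the other hand, the boundary condition~(\ref{v:g}) together with $k \geq 0$, $u \geq 0$ gives
\begin{equation*}
\frac{\partial u(x_0,t_0)}{\partial\nu} = \int_{\Omega} k(x_0,y,t_0)u^l(y,t_0)\,dy \geq 0,
\end{equation*}
which is a contradiction. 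Hence $u > 0$ on $S_T$ as well, completing the proof.

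I expect the delicate points to be the careful justification that the coefficient arising from the nonlinearity $c(x,t)u^p$ produces a valid application of the maximum principle despite $p$ being arbitrary (handled cleanly by treating the source term as merely nonnegative rather than factoring out $u$), and the precise invocation of Hopf's lemma at the boundary, which requires the interior sphere condition guaranteed by the smoothness of $\partial\Omega$. The compatibility between the sign of the normal derivative forced by Hopf's lemma and the sign dictated by the nonnegative nonlocal boundary integral is the crux that rules out boundary zeros.
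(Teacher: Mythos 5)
Your argument is correct: treating $c(x,t)u^p\geq 0$ as a nonnegative source so that the strong minimum principle applies without linearizing the nonlinearity (important when $p<1$), and then ruling out boundary zeros by pitting Hopf's lemma ($\partial u/\partial\nu<0$ at a boundary minimum) against the nonnegative nonlocal flux in~(\ref{v:g}), is the standard and essentially the intended route. Note that the paper itself does not prove this theorem but quotes it from~\cite{GladkovKavitova}; your proof matches the natural argument used there, and the only points worth making explicit are the connectedness of $\Omega$ (so the minimum principle propagates the zero back to $t=0$ and contradicts $u_0\not\equiv 0$) and that the $C^{1,0}$ regularity up to $S_T$ turns Hopf's difference-quotient inequality into the classical statement $\partial u(x_0,t_0)/\partial\nu<0$.
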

\begin{theorem}\label{p:theorem:comp-prins}
Let $u(x,t)$ and $v(x,t)$ be a supersolution and a subsolution of
problem~(\ref{v:u})--(\ref{v:n}) in $Q_{T},$ respectively. Suppose
that $u(x,t)>0$ or $v(x,t)>0$ in $Q_T\cup\Gamma_T$ if
$\min(p,l)<1.$ Then $u(x,t)\geq v(x,t)$ in $Q_T\cup\Gamma_T.$
\end{theorem}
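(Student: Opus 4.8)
The plan is to set $w=u-v$ and prove $w\geq0$ on $Q_T\cup\Gamma_T$. It suffices to establish this on each $\overline{\Omega}\times[0,T']$ with $T'<T$ and then let $T'\to T$, since every function in $C^{1,0}(Q_T\cup\Gamma_T)$ is continuous on this compact set. Subtracting the defining inequalities of Definition~\ref{v:sup} for the supersolution $u$ and the subsolution $v$, and linearizing the increasing nonlinearities $s\mapsto s^{p}$ and $s\mapsto s^{l}$ by the mean value theorem, I would write $u^{p}-v^{p}=a_1 w$ and $u^{l}-v^{l}=a_2 w$ with nonnegative coefficients $a_1=p\xi^{p-1}$, $a_2=l\eta^{l-1}$, where $\xi,\eta$ lie between $u$ and $v$. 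This produces the linear differential inequalities
\begin{equation*}
w_t-\Delta w-c(x,t)a_1 w\geq0 \text{ in } Q_{T'},\qquad \frac{\partial w}{\partial\nu}\geq\int_{\Omega}k(x,y,t)a_2(y,t)w(y,t)\,dy \text{ on } S_{T'},
\end{equation*}
together with $w(x,0)\geq0$. The hypotheses enter first through boundedness of the coefficients: when $\min(p,l)\geq1$ this is automatic on the compact set, whereas when $\min(p,l)<1$ the assumed positivity of $u$ or $v$ keeps the intermediate values $\xi,\eta$ bounded away from $0$ there, so that $b:=ca_1$ is bounded, say $0\leq b\leq B$, and $\beta:=\sup_{S_{T'}}\int_{\Omega}k a_2\,dy<\infty$. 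This is precisely the point at which positivity is indispensable.

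The heart of the argument is a barrier that neutralizes the nonlocal boundary term. I would take $h(x)=1+e^{-\rho(x)/\delta}$, where $\rho$ is a smooth nonnegative function coinciding with $\mathrm{dist}(x,\partial\Omega)$ near $\partial\Omega$; then $1\leq h\leq2$ while $\partial h/\partial\nu=\delta^{-1}$ on $\partial\Omega$. Choosing $\delta$ so small that $\delta^{-1}>2\beta$ gives $\partial h/\partial\nu>\int_{\Omega}k a_2 h\,dy$ on the boundary. Setting $g=he^{Kt}$ and taking $K$ large (possible since $\Delta h$ and $b$ are bounded and $h\geq1$) makes $g_t-\Delta g-bg>0$ in $Q_{T'}$; thus $g$ satisfies strict versions of all three relations, and $g(\cdot,0)=h>0$. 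Then $w_\varepsilon:=w+\varepsilon g$ satisfies the strict evolution and boundary inequalities and $w_\varepsilon(\cdot,0)\geq\varepsilon h>0$.

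I would then show $w_\varepsilon>0$ on $\overline{\Omega}\times[0,T']$ by a first-touching argument and finally send $\varepsilon\to0$ and $T'\to T$. If $w_\varepsilon$ were not positive, there would be a first time $t_0>0$ and a point $x_0$ with $w_\varepsilon(x_0,t_0)=0$ and $w_\varepsilon\geq0$ for $t\leq t_0$. If $x_0\in\Omega$, then $\partial_t w_\varepsilon\leq0$, $\Delta w_\varepsilon\geq0$ and $w_\varepsilon=0$ at $(x_0,t_0)$, contradicting the strict evolution inequality. If $x_0\in\partial\Omega$, then $\partial w_\varepsilon/\partial\nu(x_0,t_0)\leq0$, while the strict boundary inequality together with $w_\varepsilon(\cdot,t_0)\geq0$ and $ka_2\geq0$ forces $\partial w_\varepsilon/\partial\nu(x_0,t_0)>\int_{\Omega}k a_2 w_\varepsilon\,dy\geq0$, again a contradiction.

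The principal obstacle is the indefinite sign of the nonlocal boundary integral: at a hypothetical negative boundary minimum of $w$ the quantity $\int_{\Omega}k a_2 w\,dy$ can be negative, so a direct Hopf-lemma argument does not close. The barrier $g$, whose thin boundary layer makes $\partial g/\partial\nu$ as large as needed while $g$ stays bounded, is exactly what converts the nonlocal boundary inequality into a strict one and defeats this difficulty; the only other delicate point is the boundedness of $a_1,a_2$ for sublinear exponents, handled by the positivity hypothesis as noted above.
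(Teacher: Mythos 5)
The paper itself does not prove this theorem: it is imported verbatim from \cite{GladkovKavitova}, so there is no in-paper argument to compare against. Your proof is nevertheless a correct, self-contained argument of exactly the type standard in this literature: linearize the nonlinearities, perturb by a small multiple of a barrier whose normal derivative dominates the nonlocal boundary operator, and close with a first-touching contradiction. All steps check out, including the two-case analysis at the first touching point and the limits $\varepsilon\to0$, $T'\to T$. The one place where a reader will want an extra line is the boundedness of the linearized coefficients when $\min(p,l)<1$ and only one of $u$, $v$ is bounded below: if, say, $u\geq\delta>0$ on $\overline{\Omega}\times[0,T']$ but $v$ may vanish, it is not immediate that the mean-value point $\xi$ stays away from $0$. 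It does: for $0\leq v<u$ and $p<1$, writing $\theta=v/u$ one has $(u^{p}-v^{p})/(u-v)=u^{p-1}(1-\theta^{p})/(1-\theta)\leq u^{p-1}\leq\delta^{p-1}$ since $\theta^{p}\geq\theta$, while for $v>u\geq\delta$ the quotient is at most $p\,u^{p-1}\leq p\,\delta^{p-1}$; hence $p\,\xi^{p-1}\leq\delta^{p-1}$, i.e. $\xi\geq p^{1/(1-p)}\delta$, and likewise for the exponent $l$. With that supplied, the barrier $h=1+e^{-\rho/\delta}$ with $\partial h/\partial\nu=\delta^{-1}$ and $1\leq h\leq2$, the choice $\delta^{-1}>2\beta$, and the exponential-in-time factor do exactly what you claim, so the argument is complete.
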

The proof of the following statement relies on the continuation
principle and the construction of a supersolution.
\begin{theorem}
Let $\max(p,l)\leq1$. Then problem (\ref{v:u})--(\ref{v:n}) has
global solution for any initial datum.
\end{theorem}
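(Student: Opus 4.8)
The plan is to combine the local existence and continuation principle from \cite{GladkovKavitova} with an a priori $L^\infty$ bound obtained on every finite time interval through the comparison principle (Theorem~\ref{p:theorem:comp-prins}). By local existence there is a solution on a maximal interval $[0,T_{\max})$, and by the continuation principle it suffices to show that the solution stays bounded on $[0,T]$ for each fixed $T<T_{\max}$: a bound uniform on $[0,T]$ for every such $T$ forces $T_{\max}=+\infty$. So I would fix $T$ and set $C_T=\sup\{c(x,t):(x,t)\in\overline\Omega\times[0,T]\}$ and $K_T=\sup\{k(x,y,t):x\in\partial\Omega,\,y\in\overline\Omega,\,t\in[0,T]\}$, both finite by continuity.

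The central observation is that for $0<q\le1$ one has $s^{q}\le 1+s$ for all $s\ge0$, so the sublinear reaction and boundary terms can be dominated by linear ones. I would look for a supersolution of the separated form $\overline u(x,t)=Ae^{\beta t}e^{m\sigma(x)}$, where $\sigma\in C^{2}(\overline\Omega)$ is a fixed barrier with $\sigma\le0$ in $\Omega$, $\sigma=0$ on $\partial\Omega$, and $\partial\sigma/\partial\nu\ge1$ on $\partial\Omega$. Such a $\sigma$ exists because near the smooth boundary the function $-\operatorname{dist}(x,\partial\Omega)$ is $C^{2}$ with outward normal derivative $1$, and it can be extended into $\Omega$ while staying nonpositive. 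A purely time-dependent barrier $g(t)$ cannot work, since its normal derivative vanishes while the boundary flux $\int_\Omega k\,\overline u^{l}\,dy$ is positive; this is exactly why the spatial profile $e^{m\sigma}$ is needed.

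Then I would verify (\ref{v:sup^u})--(\ref{v:sup^n}) by choosing the three parameters in order, abbreviating $\phi=e^{m\sigma}$ and $\phi_{\min}=\min_{\overline\Omega}\phi>0$. First, on $S_T$, since $\phi=1$ on $\partial\Omega$, $\partial\phi/\partial\nu=m\,\partial\sigma/\partial\nu\ge m$, and $\int_\Omega\phi^{l}\,dy\le|\Omega|$, the boundary inequality reduces, after using $l\le1$, $\beta>0$, $A\ge1$ to discard the factor $A^{l-1}e^{(l-1)\beta t}\le1$, to $m\ge K_T|\Omega|$; this fixes $m$. Second, since $\Delta\phi=(m\Delta\sigma+m^{2}|\nabla\sigma|^{2})\phi\le M_1\phi$ with $M_1=M_1(m)$ and $\phi^{p}\le1$, the interior inequality reduces, after again discarding $A^{p-1}e^{(p-1)\beta t}\le1$ and using $\phi\ge\phi_{\min}$, to $\beta\ge M_1+C_T/\phi_{\min}$; this fixes $\beta$. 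Third, $\overline u(x,0)=A\phi\ge A\phi_{\min}\ge\|u_0\|_\infty$ once $A\ge\max(1,\|u_0\|_\infty/\phi_{\min})$, which is finite as $u_0\in C^1(\overline\Omega)$. With these choices $\overline u$ is a positive supersolution on $Q_T$, so Theorem~\ref{p:theorem:comp-prins}, whose positivity hypothesis holds since $\overline u>0$, gives $u\le\overline u\le Ae^{\beta T}$ on $Q_T\cup\Gamma_T$. As $T$ was arbitrary, this is the required bound and the solution is global.

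The main obstacle is not any single estimate but the bookkeeping: one must produce the barrier $\sigma$ with a uniformly positive normal derivative and then order the choices of $m$, $\beta$, $A$ so that none undoes another — in particular $M_1$ depends on $m$, so $m$ must be fixed from the boundary condition before $\beta$ is selected. The inequality $s^{q}\le1+s$ for $q\le1$ is what linearizes all three reductions and makes them solvable by taking the constants large.
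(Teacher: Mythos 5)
Your proposal is correct and follows essentially the same route as the paper: an explicit separated supersolution of the form (constant)$\times e^{\beta t}\times$(spatial profile equal to $1$ on $\partial\Omega$, at most $1$ inside, with strictly positive outward normal derivative), with the sublinearity $\max(p,l)\le1$ used to absorb the nonlinear terms into linear ones, followed by the comparison and continuation principles. The only cosmetic difference is that you build the spatial barrier from a distance-type function $\sigma$ with $\partial\sigma/\partial\nu\ge1$, whereas the paper uses $\exp[-a\varphi]$ with $\varphi$ the first Dirichlet eigenfunction (whose normal derivative is negative by Hopf's lemma); the roles are identical.
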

\begin{proof}
In order to prove global existence of solution
for~(\ref{v:u})--(\ref{v:n}) we construct a suitable explicit
supersolution of~(\ref{v:u})--(\ref{v:n}) in $Q_T$ for any
positive $T.$ Since $c(x,t)$ and $k(x,y,t)$ are continuous
functions there exists a constant $M>0$ such that $c(x,t)\leq M$
and $k(x,y,t)\leq M$ in $Q_T$ and $\partial\Omega\times Q_T,$
respectively. Let $\lambda_1$ be the first eigenvalue of the
following problem
\begin{equation*}
    \begin{cases}
        \Delta\varphi+\lambda\varphi=0,\;x\in\Omega,\\
        \varphi(x)=0,\;x\in\partial\Omega,
    \end{cases}
\end{equation*}
and $\varphi(x)$ be the corresponding eigenfunction with
$\sup\limits_{\Omega}\varphi(x)=1$. It is well known
$\varphi(x)>0$ in $\Omega$ and $\max\limits_{\partial\Omega}
\partial\varphi(x)/\partial\nu < 0.$

Now we show that $\overline u=d\exp[bt-a\varphi(x)]$ is a
supersolution of~(\ref{v:u})--(\ref{v:n}) in $Q_T$, where
constants $a,\;b$ and $d$ are satisfied the following
inequalities:
\begin{equation*}
      b\geq a^2\sup\limits_{\Omega}|\nabla\varphi|^2+a\lambda_1+M\max\{1,d^{p-1}\exp[a(1-p)]\},
\end{equation*}
\begin{equation*}
    a\geq M|\Omega|d^{l-1}\max\limits_{\partial\Omega}\left(-\frac{\partial\varphi}{\partial\nu}\right)^{-1},\;d\geq\exp[a]\sup\limits_\Omega u_0(x).
\end{equation*}
Here $|\Omega|$ is the Lebesgue measure of $\Omega$. It is easy to
check that
\begin{equation*}
    \overline u_t-\Delta\overline u-c(x,t)\overline u^p=\left(b-a^2|\nabla\varphi|^2
    -a\lambda_1\varphi-c(x,t)\overline u^{p-1}\right)\overline u\geq0\textrm{ in }Q_T,
\end{equation*}
\begin{eqnarray*}
&&\frac{\partial \overline u}{\partial\nu} - \int_{\Omega}
k(x,y,t)\overline u^l(y,t) \,dy =
d \exp[bt]\left( a\exp[-a\varphi(x)]\left(-\frac{\partial\varphi}{\partial\nu} \right) \right. \\
&& \left. -d^{l-1}\exp[b(l-1)t]\int_{\Omega}{k(x,y,t)\exp[-al\varphi(y)]}\,dy\right) \geq 0\textrm{ on }S_T \\
\end{eqnarray*}
and
\begin{equation*}
    \overline u(x,0)\geq u_0(x)\textrm{ in }\Omega. \Box
\end{equation*}
\end{proof}
\section{Blow-up and global existence for $\max(p,l)>1$}\label{mm}

We set
$$w(t)=\int_\Omega{u(x,t)}\,dx, \,
c_0(t)=|\Omega|^{1-p}\inf\limits_{\Omega}c(x,t), \, k_0(t) =
|\Omega|^{1-l} \inf\limits_{\Omega} \int_{\partial\Omega} k(x,y,t)
\,dS_x
$$
and consider the Cauchy problem given by one of the following
equations:
\begin{equation}\label{gl:eq1:odu}
w'(t)=c_0(t)w^p,\;p>1,
\end{equation}
\begin{equation}\label{gl:eq2:odu}
w'(t)=k_0(t)w^l,\;l>1,
\end{equation}
\begin{equation}\label{gl:eq3:odu}
w'(t)=c_0(t)w^p+k_0(t)w^l,\;p\geq1,\;l\geq1,
\end{equation}
with initial datum
\begin{equation}\label{gl:id:odu}
    w(0)=\int_\Omega{u_0(x)}\,dx.
\end{equation}
\begin{theorem}
Let $\max(p,l)>1$ and the Cauchy problem~(\ref{gl:eq1:odu}),
(\ref{gl:id:odu}) ((\ref{gl:eq2:odu}), (\ref{gl:id:odu})
or~(\ref{gl:eq3:odu}), (\ref{gl:id:odu})) does not have a global
solution. Then the solution of~(\ref{v:u})--(\ref{v:n}) blows up
in finite time.
\end{theorem}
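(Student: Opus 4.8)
The plan is to derive a differential inequality for the total mass $w(t)=\int_\Omega u(x,t)\,dx$ and then compare it with the ordinary differential equation whose solution is assumed to blow up. First I would dispose of the trivial case: if $u_0\equiv0$ then $w(0)=0$ and each of the Cauchy problems~(\ref{gl:eq1:odu}), (\ref{gl:eq2:odu}), (\ref{gl:eq3:odu}) with datum~(\ref{gl:id:odu}) has the global solution $w\equiv0$, contradicting the hypothesis; hence $u_0\not\equiv0$, and Theorem~\ref{p:theorem:positive} gives $u>0$ in $Q_T\cup S_T$, so $w(t)>0$ on the maximal existence interval. Arguing by contradiction, I would then suppose that the solution of~(\ref{v:u})--(\ref{v:n}) is global.

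Next I would differentiate $w$ and use the equation~(\ref{v:u}) together with the divergence theorem and the boundary condition~(\ref{v:g}):
\begin{equation*}
w'(t)=\int_\Omega u_t\,dx=\int_{\partial\Omega}\frac{\partial u}{\partial\nu}\,dS_x+\int_\Omega c(x,t)u^p\,dx
=\int_{\partial\Omega}\int_\Omega k(x,y,t)u^l(y,t)\,dy\,dS_x+\int_\Omega c(x,t)u^p\,dx.
\end{equation*}
The interior term is bounded below by $\inf_\Omega c(x,t)\int_\Omega u^p\,dx$, and the boundary term, after interchanging the order of integration (Fubini), equals $\int_\Omega u^l(y,t)\bigl(\int_{\partial\Omega}k(x,y,t)\,dS_x\bigr)\,dy$, which is at least $\bigl(\inf_{y\in\Omega}\int_{\partial\Omega}k(x,y,t)\,dS_x\bigr)\int_\Omega u^l\,dy$. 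For $p\ge1$ and $l\ge1$ Jensen's inequality gives $\int_\Omega u^p\,dx\ge|\Omega|^{1-p}w^p$ and $\int_\Omega u^l\,dy\ge|\Omega|^{1-l}w^l$, and matching the powers of $|\Omega|$ against the definitions of $c_0(t)$ and $k_0(t)$ yields
\begin{equation*}
w'(t)\ge c_0(t)w^p+k_0(t)w^l ,
\end{equation*}
the right-hand side of~(\ref{gl:eq3:odu}). In the setting of~(\ref{gl:eq1:odu}), where only $p>1$ is assumed, I would simply discard the nonnegative boundary term (as $k\ge0$, $u\ge0$) and keep $w'(t)\ge c_0(t)w^p$; symmetrically, for~(\ref{gl:eq2:odu}) I would discard the nonnegative interior term and keep $w'(t)\ge k_0(t)w^l$.

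Finally I would invoke a comparison argument for ordinary differential equations. Since $w(0)$ equals the datum~(\ref{gl:id:odu}) and the right-hand sides $c_0(t)w^p$, $k_0(t)w^l$ (and their sum) are locally Lipschitz in $w$ for $w>0$ whenever the relevant exponents are $\ge1$, the function $w(t)$ satisfying the differential inequality dominates the solution $\underline w(t)$ of the corresponding Cauchy problem for as long as both are defined. By hypothesis $\underline w(t)$ is not global, hence $\underline w(t)\to+\infty$ at some finite time $T^*$; then $w(t)\ge\underline w(t)\to+\infty$ as $t\to T^*$, contradicting the finiteness of $w(t)$ for a globally defined solution. Therefore the maximal existence time is finite, which by the continuation principle forces blow-up. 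The main obstacle is the careful bookkeeping in the boundary estimate—applying the divergence theorem, interchanging the integrals, and matching the powers of $|\Omega|$ so that the constants reproduce \emph{exactly} $c_0(t)$ and $k_0(t)$—together with checking that the direction of Jensen's inequality is the favorable one, which is precisely what dictates the case split according to whether $p>1$, $l>1$, or both exponents are $\ge1$.
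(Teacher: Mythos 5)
Your proposal is correct and follows essentially the same route as the paper: integrate the equation over $\Omega$, use Green's identity and the boundary condition, bound below via $\inf_\Omega c$ and $\inf_\Omega\int_{\partial\Omega}k\,dS_x$, apply Jensen's inequality to obtain $w'(t)\ge c_0(t)w^p$ (resp. $k_0(t)w^l$ or their sum), and conclude by ODE comparison. The extra care you take with the trivial datum and the contradiction framing is fine but not needed beyond what the paper's terse argument already contains.
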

\begin{proof}
Suppose that the Cauchy problem~(\ref{gl:eq1:odu}),
(\ref{gl:id:odu}) does not have a global solution. Integrating (\ref{v:u}) over $\Omega$ and using Green's identity, we have
   \begin{eqnarray}\label{gl:eq}
   \hskip-15pt
w'(t) &=& \int_\Omega{c(x,t)u^p(x,t)}\,dx + \int_{\partial\Omega}
\int_\Omega{k(x,y,t)u^l(y,t)}\,dy\,dS_x \nonumber\\
&\geq&
\inf\limits_{\Omega}c(x,t)\int_\Omega
u^p(x,t)\,dx+\inf\limits_{\Omega} \int_{\partial\Omega}
k(x,y,t)\,dS_x \int_\Omega u^l(y,t)\,dy.
    \end{eqnarray}
By Jensen's inequality $w'(t)\geq c_0(t)w^p$. Now the comparison
principle for ordinary differential equation implies the claim.
The proof for other cases is similar. \qed
\end{proof}
\begin{remark}\label{gl:blow-up}
Solving the Cauchy problem for equations
(\ref{gl:eq1:odu})--(\ref{gl:eq3:odu}) with initial datum
(\ref{gl:id:odu}), we obtain that (\ref{v:u})--(\ref{v:n}) does
not have global solutions in the following cases:
\begin{equation*}
p>1 \text{ and }
\int_\Omega{u_0(x)}\,dx>\left((p-1)\int_0^\infty
c_0(t)\,dt\right)^{-1/(p-1)},
\end{equation*}
\begin{equation*}
l>1 \text{ and }
\int_\Omega{u_0(x)}\,dx>\left((l-1)\int_0^\infty
k_0(t)\,dt\right)^{-1/(l-1)},
\end{equation*}
\begin{equation*}
p=1, l>1  \text{ and }    \int_\Omega{u_0(x)}\,dx>\left((l-1)
\int_0^\infty k_0(t)\exp[(l-1)\int_0^t{c_0(s)\,ds}]\,dt\right)^{-1/(l-1)},
\end{equation*}
\begin{equation*}
l=1, p>1 \text{ and }
\int_\Omega{u_0(x)}\,dx>\left((p-1)\int_0^\infty
c_0(t)\exp[(p-1)\int_0^t{k_0(s)\,ds}]\,dt\right)^{-1/(p-1)},
\end{equation*}
$p>1, l>1$ and
\begin{equation*}
\int_\Omega{u_0(x)}\,dx > \min \left\{ \left((p-1)\int_0^\infty
c_0(t)\,dt\right)^{-1/(p-1)}, \left((l-1)\int_0^\infty
k_0(t)\,dt\right)^{-1/(l-1)} \right\}.
\end{equation*}
In particular, there are not nontrivial global solutions
of~(\ref{v:u})--(\ref{v:n}) if $p>1$ and
\begin{equation}\label{gl:c0-blow}
    \int_0^\infty c_0(t)\,dt=\infty,
\end{equation}
$l>1$ and
\begin{equation}\label{gl:k01-blow}
    \int_0^\infty k_0(t)\,dt=\infty,
\end{equation}
$p=1,\;l>1$ and
\begin{equation}\label{gl:k02-blow}
    \int_0^\infty k_0(t)\exp[(l-1)\int_0^t{c_0(s)\,ds}]\,dt=\infty,
\end{equation}
$p>1,\;l=1$ and
\begin{equation}\label{gl:c02-blow}
    \int_0^\infty c_0(t)\exp[(p-1)\int_0^t{k_0(s)\,ds}]\,dt=\infty,
\end{equation}
$p>1,\; l>1$ and
\begin{equation*}
    \int_0^\infty ( c_0(t) +  k_0(t) ) \, dt=\infty.
\end{equation*}
\end{remark}

Now we obtain sufficient conditions for blow-up of all nontrivial
solutions of~(\ref{v:u})--(\ref{v:n}) for $p>1$
if~(\ref{gl:c0-blow}) is not fulfilled as well as for $l>1$
if~(\ref{gl:k01-blow}) is not fulfilled. Let us introduce the
following auxiliary functions
$$
\overline{c}(t) = \int_{\Omega} c(x,t) \, dx, \quad
\overline{k}(t) = \int_{\partial\Omega} \int_{\Omega} k(x,y,t) \,
dy\,dS_x.
$$
\begin{theorem}\label{gl:theorem:l<1_p<1}
Problem~(\ref{v:u})--(\ref{v:n}) does not have nontrivial global solutions\\
if $p>1$ and
\begin{equation}\label{gl:p>1_l<1}
\int_0^\infty c_0(t)\,dt<\infty \textrm{ and
}\lim\limits_{t\to\infty}{\int_0^t{\overline{k}(\tau)}\,d\tau\left(\int_t^\infty{c_0(\tau)}\,d\tau\right)^{1/{(p-1)}}}=\infty,
\end{equation}
or $l>1$ and
\begin{equation*}
\int_0^\infty k_0(t)\,dt<\infty\textrm{ and
}\lim\limits_{t\to\infty}{\int_0^t{\overline{c}(\tau)}\,d\tau\left(\int_t^\infty{k_0(\tau)}\,d\tau\right)^{1/{(l-1)}}}=\infty.
\end{equation*}
\end{theorem}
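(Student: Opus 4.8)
The plan is to argue by contradiction: assume that $u$ is a nontrivial global solution of (\ref{v:u})--(\ref{v:n}) and show that, under either set of hypotheses, $w(t)=\int_\Omega u\,dx$ must in fact blow up in finite time. Throughout I treat the case $p>1$ in detail; the case $l>1$ is completely symmetric, with the roles of $c_0,\overline c,p$ and $k_0,\overline k,l$ interchanged. The overall idea is that the boundary term pumps enough mass into the system to drive $w$ above the threshold at which the reaction term $c_0(t)w^p$ alone forces finite-time blow-up.

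First I would establish a uniform-in-time positive lower bound for $u$. Let $\underline u$ be the solution of the homogeneous Neumann heat problem $\underline u_t=\Delta\underline u$ in $\Omega$, $\partial\underline u/\partial\nu=0$ on $\partial\Omega$, $\underline u(x,0)=u_0(x)$. Since $c\geq0$ and $k\geq0$, this $\underline u$ is a nonnegative subsolution of (\ref{v:u})--(\ref{v:n}); as $u>0$ by Theorem \ref{p:theorem:positive}, Theorem \ref{p:theorem:comp-prins} gives $u\geq\underline u$ in $Q_T\cup\Gamma_T$. The Neumann heat semigroup conserves the spatial average and, by the spectral gap, $\underline u(\cdot,t)\to|\Omega|^{-1}\int_\Omega u_0\,dx$ uniformly on $\overline\Omega$ as $t\to\infty$. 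Since $u_0\geq0$, $u_0\not\equiv0$, this limit is a positive constant, so there exist $m>0$ and $T_1>0$ with $u(x,t)\geq\underline u(x,t)\geq m$ for all $x\in\overline\Omega$ and $t\geq T_1$. I expect this step to be the main obstacle, since everything downstream rests on a lower bound that does not degenerate as $t\to\infty$; the point is that comparison with the conservative Neumann flow, rather than with a decaying supersolution, furnishes exactly such a bound.

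With the lower bound in hand, I would feed $u\geq m$ into the boundary term of (\ref{gl:eq}). For $t\geq T_1$,
\[
w'(t)\geq\int_{\partial\Omega}\int_\Omega k(x,y,t)u^l(y,t)\,dy\,dS_x\geq m^l\int_{\partial\Omega}\int_\Omega k(x,y,t)\,dy\,dS_x=m^l\overline k(t),
\]
so $w(t)\geq m^l\int_{T_1}^t\overline k(\tau)\,d\tau$. Writing $C(t)=\int_t^\infty c_0(\tau)\,d\tau$ (finite and tending to $0$ because $\int_0^\infty c_0<\infty$), the second hypothesis in (\ref{gl:p>1_l<1}) forces $\big(\int_{T_1}^t\overline k\big)\,C(t)^{1/(p-1)}\to\infty$, since the subtracted constant $\int_0^{T_1}\overline k$ contributes a term that vanishes as $C(t)^{1/(p-1)}\to0$. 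Hence $w(t)\,C(t)^{1/(p-1)}\to\infty$, and in particular there is a time $t_0\geq T_1$ with $w(t_0)>\big((p-1)\int_{t_0}^\infty c_0(\tau)\,d\tau\big)^{-1/(p-1)}$.

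Finally I would discard the boundary term and keep only the reaction term: dropping the nonnegative boundary integral in (\ref{gl:eq}) and applying Jensen's inequality (as in the previous theorem) gives $w'(t)\geq c_0(t)w^p(t)$ for $p>1$. Comparing with the ODE $v'=c_0(t)v^p$, $v(t_0)=w(t_0)$, whose explicit solution satisfies $v(t)^{1-p}=w(t_0)^{1-p}-(p-1)\int_{t_0}^t c_0$, the choice of $t_0$ makes the right-hand side reach $0$ at a finite time, so $v$ — and therefore $w\geq v$ — becomes infinite in finite time. This contradicts global existence and proves the claim for $p>1$. For $l>1$ the same scheme applies verbatim: the lower bound yields $w'(t)\geq m^p\overline c(t)$, whence $w(t)\geq m^p\int_{T_1}^t\overline c$, the second condition drives $w(t)\big(\int_t^\infty k_0\big)^{1/(l-1)}\to\infty$, and the retained boundary term $w'\geq k_0(t)w^l$ then forces finite-time blow-up.
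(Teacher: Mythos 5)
Your proof is correct and follows essentially the same route as the paper: a time-uniform positive lower bound on $u$ is fed into the boundary term of (\ref{gl:eq}) to push $w(t)$ past the blow-up threshold of the ODE $w'=c_0(t)w^p$ (resp.\ $w'=k_0(t)w^l$). The only real difference is in how that lower bound is produced --- the paper compares with the constant subsolution $\alpha\le u(\cdot,\varepsilon)$ starting at a time $\varepsilon>0$ where Theorem~\ref{p:theorem:positive} gives strict positivity (which also keeps the positivity hypothesis of Theorem~\ref{p:theorem:comp-prins} satisfied when $l<1$; you should likewise launch your Neumann-heat-flow comparison from $t=\varepsilon$ rather than $t=0$, since $u_0$ may vanish somewhere) --- whereas you use the mass-conserving Neumann flow; both yield the needed bound and the rest of your argument matches the paper's step for step.
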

\begin{proof}
Suppose $p>1$ and (\ref{gl:p>1_l<1}) holds. Let $u(x,t)$ be
nontrivial solution of~(\ref{v:u})--(\ref{v:n}) in $Q_T.$ Then by
Theorem~\ref{p:theorem:positive} we conclude
$u(x,\varepsilon)\geq\alpha$ for any $x\in\Omega$ and some
$\varepsilon>0$ and $\alpha >0$.  It is easy to see that
$\underline{v}(x,t)=\alpha$ is a subsolution of the following
problem
   \begin{eqnarray*}
    \left\{ \begin{array}{ll}
    v_{t} =\Delta v+c(x,t)v^p,\;x \in\Omega,\;\varepsilon<t<T,\\
    \frac{\partial v(x,t)}{\partial\nu}=\int_{\Omega}{k(x,y,t)v^l(y,t)}\,dy,\;x\in\partial\Omega,\;\varepsilon<t<T,\\
    v(x,\varepsilon)=u(x,\varepsilon),\;x\in\Omega.
    \end{array} \right.
    \end{eqnarray*}
Then by comparison principle $u(x,t)\geq \alpha$ for $t\in[\varepsilon,T)$.
Using (\ref{gl:eq}) and Jensen's inequality, we get
\begin{equation}\label{gl:444}
w'(t)\geq c_0(t)w^p+\alpha^l \overline{k}(t)\textrm{ for
}\varepsilon\leq t<T.
\end{equation}
From~(\ref{gl:444}) we have
\begin{equation}\label{gl:1}
    w'(t)\geq c_0(t)w^p
\end{equation}
and
\begin{equation}\label{gl:2}
    w'(t)\geq \alpha^l \overline{k}(t).
\end{equation}
By~(\ref{gl:p>1_l<1}) we can chose a constant $t_0>0$ such that
\begin{equation}\label{gl:333}
\left(\alpha^l
(p-1)^{1/(p-1)}\right)^{-1}<\int_\varepsilon^{t_0}{\overline{k}(\tau)}\,d\tau\left(\int_{t_0}^\infty{c_0(\tau)}\,d\tau\right)^{1/{(p-1)}}.
\end{equation}
Obviously, we need consider the case $t_0<T$.
For $\varepsilon\leq t_0<t<T$ from~(\ref{gl:1}), (\ref{gl:2}) we conclude
\begin{equation}\label{gl:3}
    w(t)\geq\left(w^{-(p-1)}(t_0)-(p-1)\int_{t_0}^t{c_0(\tau)}\,d\tau\right)^{-1/(p-1)}
\end{equation}
and
\begin{equation}\label{gl:4}
    w(t_0) \geq \alpha^l \int_{\varepsilon}^{t_0}{\overline{k}(\tau)}\,d\tau.
\end{equation}
By virtue of~(\ref{gl:333}), (\ref{gl:4}) we have
\begin{equation}\label{gl:5}
w^{-(p-1)}(t_0) < (p-1)\int_{t_0}^\infty{c_0(\tau)}\,d\tau.
\end{equation}
From~(\ref{gl:3}), (\ref{gl:5}) we can see that a solution of~(\ref{v:u})--(\ref{v:n}) blows up in finite time.
The proof of the second part of
the theorem is similar. \qed
\end{proof}

To obtain sufficient conditions for the existence of bounded
solutions for~(\ref{v:u})--(\ref{v:n}) we consider the following
auxiliary problem
\begin{equation}\label{gl:vsp-problem}
\left\{
  \begin{array}{ll}
    v_t=\Delta v,\;x\in\Omega,\;t>0 \\
    \frac{\partial v(x,t)}{\partial \nu}=g(t),\;x\in\partial\Omega,\;t>0, \\
    v(x,0)=v_0(x),\; x\in\Omega.
  \end{array}
\right.
\end{equation}
With respect to the data of~(\ref{gl:vsp-problem}) we suppose:
\begin{equation}\label{gl:1g(t)}
    g(t)\in C([0,+\infty)),\;g(t)\geq0,
\end{equation}
\begin{equation}\label{gl:v0}
    v_0(x)\in C^1(\overline\Omega),\;v_0(x)\geq0 \textrm{ in }
\Omega, \;\frac{\partial v_0(x)}{\partial\nu}=g(0)  \textrm{ on }
\partial\Omega.
\end{equation}
\begin{lemma}\label{gl:vsp_lemma}
Let~(\ref{gl:1g(t)}), (\ref{gl:v0}) hold. Then a solution
of~(\ref{gl:vsp-problem}) is bounded if and only if
\begin{equation}\label{gl:1g}
\int_0^\infty{g(t)}\,dt<\infty
\end{equation}
and there exist positive constants $\alpha,\;t_0$ and $c$ such that $\alpha>t_0$ and
\begin{equation}\label{gl:2g}
    \int_{t-t_0}^t {\frac{g(\tau)}{\sqrt{t-\tau}}}\,d\tau\leq c\textrm{ for any }t\geq\alpha.
\end{equation}
\end{lemma}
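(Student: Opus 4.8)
The plan is to represent the solution of~(\ref{gl:vsp-problem}) through the Neumann heat kernel and reduce boundedness to two-sided estimates on a boundary convolution kernel. Let $G(x,y,t)$ be the fundamental solution of the heat operator on $\Omega$ with the homogeneous Neumann condition $\partial G(x,y,t)/\partial\nu_x=0$ for $x\in\partial\Omega$. Using $\partial_t G=\Delta_x G=\Delta_y G$, the symmetry $G(x,y,t)=G(y,x,t)$ and Green's second identity, differentiating $\int_\Omega G(x,y,t-\tau)v(y,\tau)\,dy$ in $\tau$ and integrating over $(0,t)$ yields the representation
\[
v(x,t)=\int_\Omega G(x,y,t)v_0(y)\,dy+\int_0^t g(\tau)K(x,t-\tau)\,d\tau,\qquad K(x,s)=\int_{\partial\Omega}G(x,y,s)\,dS_y .
\]
Because $\int_\Omega G(x,y,t)\,dy=1$ and $G\ge0$, the first term is at most $\sup_\Omega v_0$, so the whole question concerns the second term. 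The argument rests on the standard Gaussian two-sided bounds for $G$ on a smooth bounded domain, which give, for $x\in\partial\Omega$, constants with $c_1 s^{-1/2}\le K(x,s)\le c_2 s^{-1/2}$ for $0<s\le t_0$ and $K(x,s)\le c_3$ for $s\ge t_0$; the exponent $-1/2$ arises because integrating an $n$-dimensional Gaussian against the $(n-1)$-dimensional boundary removes $(n-1)/2$ powers of $s$, while for large $s$ the kernel equilibrates to the constant $|\partial\Omega|/|\Omega|$.

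For sufficiency, assuming~(\ref{gl:1g}) and~(\ref{gl:2g}), I would split the Duhamel integral at $\tau=t-t_0$. On the recent interval $\tau\in(t-t_0,t)$ the bound $K(x,t-\tau)\le c_2(t-\tau)^{-1/2}$ and~(\ref{gl:2g}) control that piece by $c_2c$ for all $t\ge\alpha$; on the old interval $\tau\in(0,t-t_0)$ the bound $K(x,t-\tau)\le c_3$ gives at most $c_3\int_0^\infty g(\tau)\,d\tau<\infty$ by~(\ref{gl:1g}). For $t<\alpha$ boundedness is automatic since $v$ is continuous on the compact set $\overline\Omega\times[0,\alpha]$. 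Hence $v$ is globally bounded.

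For necessity, suppose $v\le C$. Integrating the equation over $\Omega$ and using the divergence theorem gives the mass identity $\int_\Omega v(x,t)\,dx=\int_\Omega v_0(x)\,dx+|\partial\Omega|\int_0^t g(\tau)\,d\tau$; since the left-hand side is at most $C|\Omega|$, condition~(\ref{gl:1g}) follows. To get~(\ref{gl:2g}), fix a point $x_\ast\in\partial\Omega$; discarding the nonnegative initial term and the old-time part of the representation and applying the lower bound $K(x_\ast,t-\tau)\ge c_1(t-\tau)^{-1/2}$ on $(t-t_0,t)$ gives, for $t\ge\alpha>t_0$,
\[
C\ge v(x_\ast,t)\ge c_1\int_{t-t_0}^t\frac{g(\tau)}{\sqrt{t-\tau}}\,d\tau ,
\]
which is~(\ref{gl:2g}) with $c=C/c_1$.

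The main obstacle is the kernel estimate, and in particular the \emph{lower} bound $K(x,s)\ge c_1s^{-1/2}$ uniformly in $x\in\partial\Omega$: one must verify that the near-diagonal part of $\int_{\partial\Omega}G(x,y,s)\,dS_y$ genuinely recovers the full $s^{-(n-1)/2}$ gain from the tangential directions, which is where the smoothness of $\partial\Omega$ and the reflection structure of the Neumann kernel are used. The upper bounds, the large-time equilibration, and the mass identity are comparatively routine.
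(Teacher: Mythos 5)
Your argument is correct and follows essentially the same route as the paper: the representation of $v$ via the Neumann Green function, the two-sided boundary-integral bound $c\,s^{-1/2}\le\int_{\partial\Omega}G_N(x,y;s)\,dS_y\le C\,s^{-1/2}$ for small $s$ together with the large-time equilibration of $G_N$ to $1/|\Omega|$, and the splitting of the Duhamel term into a recent piece controlled by~(\ref{gl:2g}) and an old piece controlled by~(\ref{gl:1g}). The only (harmless) deviation is that you spell out the necessity direction, which the paper dismisses as ``proved similarly,'' and you obtain~(\ref{gl:1g}) from the mass identity rather than from the uniform lower bound on $\int_{\partial\Omega}G_N\,dS_\xi$; both are fine.
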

\begin{proof}
Let $G_N(x,y;t-\tau)$ be the Green function of the heat equation
with homogeneous Neumann boundary condition. We note that
$G_N(x,y;t-\tau)$ has the following properties (see, for example,
\cite{Kahane}):
    \begin{equation}\label{gl:1G_N}
        G_N(x,y;t-\tau)\geq0,\;x,y\in\Omega,\;0\leq\tau<t<T,
    \end{equation}
    \begin{equation}\label{gl:2G_N}
    \int_{\Omega}{G_N(x,y;t-\tau)}\,dy=1,\;x\in\Omega,\;0\leq\tau<t<T.
    \end{equation}
Moreover, similarly as in~\cite{Hu_Yin1} and \cite{Kahane} we can show
 \begin{equation}\label{gl:22G_N}
        |G_N(x,y;t-\tau)-1/|\Omega||\leq c_1\exp[-c_2(t-\tau)],\;x,\,y\in\Omega,\; t-\tau \geq
        \varepsilon,
    \end{equation}
\begin{equation}\label{gl:3G_N}
\frac{c_3}{\sqrt{t-\tau}}\leq\int_{\partial\Omega}G_N(x,y;t-\tau)\,dS_y
\leq \frac{c_4}{\sqrt{t-\tau}},\;x\in\partial\Omega,\;0< t-\tau
\leq \varepsilon,
\end{equation} for some small $\varepsilon>0$ and
    \begin{equation}\label{gl:4G_N}
        \int_{\partial\Omega}{G_N(x,\xi;t-\tau)}\,dS_\xi\geq c_5,\;x\in\overline\Omega,\;0\leq\tau<t<T.
    \end{equation}
Here and subsequently by $c_i\,(i\in \mathbb N)$ we denote
positive constants. Note that the upper bound in~(\ref{gl:3G_N})
is true for any $x\in\overline\Omega$. It is well known that
problem~(\ref{gl:vsp-problem}) is equivalent to the equation
    \begin{equation}\label{gl:bound}
        v(x,t)=\int_\Omega{G_N(x,y;t)v_0(y)}\,dy+\int_0^t{g(\tau)\int_{\partial\Omega}G_N(x,y;t-\tau)}\,dS_y\,d\tau,\;x\in\overline\Omega,\;t>0.
    \end{equation}
Using~(\ref{gl:1g}) --(\ref{gl:3G_N}), (\ref{gl:bound}) for some
$\varepsilon>0$ we get
    \begin{eqnarray*}
        v(x,t)&\leq& \sup_\Omega v_0(x)+|\partial\Omega|\int_0^{t-\varepsilon}\left(c_1\exp[-c_2(t-\tau)]+1/|\Omega|\right)g(\tau)\,d\tau\nonumber\\
        &&+ c_4\int_{t-\varepsilon}^t
        {\frac{g(\tau)}{\sqrt{t-\tau}}}\,d\tau\leq
        c_6,\;x\in\overline\Omega,\;t\geq\alpha.
    \end{eqnarray*}
Hence, $v(x,t)$ is a bounded solution of (\ref{gl:vsp-problem}). Necessity of~(\ref{gl:1g}), (\ref{gl:2g}) for
boundedness of a solution of~(\ref{gl:vsp-problem}) is proved similarly.
\end{proof}
\begin{remark}\label{4:zam}
The function $g(t)$ satisfies~(\ref{gl:2g}) if there exist
positive constants $\alpha,\;t_0$ and $c$ such that $\alpha>t_0$
and for some $q>2$ the inequality
        \begin{equation}\label{gl:g^p}
        \int\limits_{t-t_0}^tg^q(\tau)\,d\tau\leq c
        \end{equation}
holds  for any $t\geq\alpha.$ Indeed, applying H$\ddot{o}$lder's
inequality, we obtain
     \begin{equation*}
\int_{t-t_0}^t
{\frac{g(\tau)}{\sqrt{t-\tau}}}\,d\tau\leq\left(\int_{t-t_0}^t{g^q(\tau)}\,d\tau\right)^{1/q}\left(\int_{t-t_0}^t{(t-\tau)^{-m/2}}\,d\tau\right)^{1/m}\leq
c(t_0),
    \end{equation*}
where $1/q+1/m=1$, and hence $1<m<2$.
Now we construct a function which demonstrates
that~(\ref{gl:1g(t)}), (\ref{gl:1g}) and (\ref{gl:g^p}) with $q=2$
do not guarantee~(\ref{gl:2g}). Denote
$O_n=[n-1/n^3,n],\;n=2,3,\dots$, and consider the following
function
    \begin{equation*}
    g(t)=
    \left\{
        \begin{array}{ll}
        g_n(t),\;t\in O_n,\;n=2,3,\dots,\\
        f(t),\;t\in[0,+\infty)\backslash\bigcup_{n=2}^\infty O_n,
        \end{array}
    \right.
    \end{equation*}
where
$$g_n(t)=\frac{1}{\sqrt{n+1/n^6-t}|\ln(n+1/n^6-t)|^\alpha},\;\alpha\in(1/2,1),\;n=2,3,\dots,$$
$f(t)$ is a continuous function such that $g(t)$
satisfies~(\ref{gl:1g(t)}) and
    \begin{equation*}
\int\limits_{[0,+\infty)\backslash\bigcup_{n=2}^\infty O_n}\left(
f^2(t)+f(t) \right)\,dt<\infty.
    \end{equation*}
A straightforward computations show that $g(t)$
satisfies~(\ref{gl:1g}), (\ref{gl:g^p}) with $q=2$ and
    \begin{equation*}
\int_{n-1/n^3}^{n}
{\frac{g(\tau)}{\sqrt{n-\tau}}}\,d\tau\to\infty\textrm{ as
}n\to\infty.
    \end{equation*}
\end{remark}
Put $c_1(t)=\sup\limits_{\Omega}c(x,t)$ and
$k_1(t)=\sup\limits_{\partial\Omega\times\Omega}k(x,y,t)$. Suppose
that $c_1(t)$ and $k_1(t)$ satisfy the following conditions:
\begin{equation}\label{gl:c(x,t)}
   \int_0^\infty \left(c_1(t)+k_1(t)\right)\,dt<\infty
\end{equation}
and there exist positive constants $\alpha,\;t_0$ and $K$ such
that $\alpha>t_0$ and
\begin{equation}\label{gl:k(x,y,t)}
\int\limits_{t-t_0}^t\frac{k_1(\tau)}{\sqrt{t-\tau}}\,d\tau\leq
K\textrm{ for any }t\geq\alpha.
\end{equation}\label{gl:theorem:1}
\begin{theorem}\label{gl:theorem:l>1_p>1}
     Let $\min(p,l)>1$ and~(\ref{gl:c(x,t)}), (\ref{gl:k(x,y,t)}) hold. Then problem~(\ref{v:u})--(\ref{v:n}) has bounded global solutions for small initial data.
\end{theorem}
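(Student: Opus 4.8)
The plan is to produce an explicit bounded supersolution of (\ref{v:u})--(\ref{v:n}) on $Q_T$ for every $T>0$, and then to combine the comparison principle (Theorem~\ref{p:theorem:comp-prins}, whose positivity hypothesis is vacuous here because $\min(p,l)>1$) with the continuation principle: a solution dominated by a bounded supersolution stays bounded on its maximal interval of existence and therefore extends globally. The local solution itself is provided by the existence theorem of~\cite{GladkovKavitova}. Since $p,l>1$ the nonlinearities are superlinear, so for data of small size the boundary flux and the interior source can be absorbed; this is exactly where the smallness of the initial datum enters.

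The building block is the linear auxiliary problem (\ref{gl:vsp-problem}). Let $v$ be its solution for the flux $g(t)=k_1(t)$ and for some fixed $v_0\in C^1(\overline\Omega)$ with $v_0\ge0$ and $\partial v_0/\partial\nu=k_1(0)$ on $\partial\Omega$ (such a $v_0$ exists). The hypotheses of Lemma~\ref{gl:vsp_lemma} hold for this choice: condition (\ref{gl:1g}) follows from (\ref{gl:c(x,t)}) because $0\le k_1\le c_1+k_1$, and condition (\ref{gl:2g}) is precisely (\ref{gl:k(x,y,t)}). Hence $v$ is global and bounded, say $0\le v(x,t)\le C_\ast$. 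Writing $A_c=\int_0^\infty c_1(t)\,dt<\infty$ (again by (\ref{gl:c(x,t)})) and $\delta=\sup_\Omega u_0$, I would then set, for a constant $C>0$ fixed below,
\begin{equation*}
\overline u(x,t)=|\Omega|\,C^{\,l}\,v(x,t)+\delta+C^{\,p}\int_0^t c_1(\tau)\,d\tau .
\end{equation*}
By linearity $|\Omega|C^{l}v$ solves the heat equation with flux $|\Omega|C^{l}k_1(t)$, so $\overline u$ has the regularity required in Definition~\ref{v:sup}, $\overline u\ge0$, and $\overline u(x,0)=|\Omega|C^{l}v_0(x)+\delta\ge\delta\ge u_0(x)$.

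Next I would check the differential inequalities (\ref{v:sup^u})--(\ref{v:sup^g}) under the a priori bound $\overline u\le C$. A direct computation gives $\overline u_t-\Delta\overline u=C^{p}c_1(t)\ge c(x,t)\,\overline u^{\,p}$, since $c(x,t)\le c_1(t)$ and $\overline u^{\,p}\le C^{p}$; and on $S_T$ one has $\partial\overline u/\partial\nu=|\Omega|C^{l}k_1(t)\ge\int_\Omega k(x,y,t)\overline u^{\,l}(y,t)\,dy$, since $k\le k_1$ and $\overline u^{\,l}\le C^{l}$. It remains to secure the standing assumption $\overline u\le C$. From the bound on $v$ and on the integral of $c_1$,
\begin{equation*}
\overline u\le|\Omega|\,C^{\,l}C_\ast+C^{\,p}A_c+\delta ,
\end{equation*}
so it suffices that $|\Omega|C^{l}C_\ast+C^{p}A_c+\delta\le C$. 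Because $l>1$ and $p>1$, I can fix $C=C_0$ small enough that $|\Omega|C_0^{\,l}C_\ast+C_0^{\,p}A_c\le C_0/2$; the requirement then reduces to $\delta\le C_0/2$, i.e. to smallness of $\sup_\Omega u_0$. For such initial data $\overline u\le C_0$ holds, the two inequalities above are genuine, and $\overline u$ is a bounded global supersolution. Comparison gives $u\le\overline u\le C_0$, and the continuation principle yields a global bounded solution.

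The argument is a fixed-point-free ``closing of the loop'': the bound $\overline u\le C$ is used to license the supersolution inequalities and is then verified a posteriori via the explicit estimate, the scaling $g\mapsto|\Omega|C^{l}k_1$ afforded by linearity, and the superlinearity $p,l>1$. I expect the only genuinely substantive input to be the boundedness of $v$, which is precisely the content of Lemma~\ref{gl:vsp_lemma}; the remaining steps are elementary. The main technical nuisances I anticipate are the admissibility of $v_0$ (a nonnegative $C^1$ function with prescribed normal derivative, so that (\ref{gl:v0}) and the representation (\ref{gl:bound}) apply) and the bookkeeping of constants, neither of which should obstruct the proof.
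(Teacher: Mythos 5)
Your proof is correct, and it rests on the same pillars as the paper's: Lemma~\ref{gl:vsp_lemma} applied to the linear problem~(\ref{gl:vsp-problem}) with flux $g=k_1$, the comparison principle (with the positivity hypothesis indeed vacuous since $\min(p,l)>1$), and the superlinearity $p,l>1$ to absorb the nonlinear terms at small amplitude. The difference is in the shape of the supersolution. The paper takes the multiplicative ansatz $\overline u=af(t)v(x,t)$, where $f$ solves the Bernoulli-type equation $f'=a^{p-1}V^{p-1}c_1(t)f^p$ and stays bounded precisely because $\int_0^\infty c_1(t)\,dt<\infty$; you take the additive ansatz $|\Omega|C^l v+\delta+C^p\int_0^t c_1(\tau)\,d\tau$, which trades the ODE for sup-norm bookkeeping. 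Your ``closing of the loop'' is not actually circular: the bound $\overline u\le C$ follows unconditionally from the explicit formula once $C$ and $\delta$ are fixed, after which the two differential inequalities are verified outright, so the logic is sound. Both constructions absorb the boundary term identically, by comparing $\int_\Omega k(x,y,t)\overline u^{\,l}(y,t)\,dy\le |\Omega|k_1(t)C^l$ with the prescribed Neumann datum of the scaled $v$. Your version is slightly more elementary (no ODE to integrate), at the cost of phrasing smallness via $\sup_\Omega u_0\le C_0/2$ rather than the paper's pointwise condition $u_0\le aA^{-1/(p-1)}v(x,0)$; since the paper chooses $v(x,0)$ positive, the two classes of admissible small data are essentially equivalent.
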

\begin{proof}
Let $v(x,t)$ be a solution of~(\ref{gl:vsp-problem}) with boundary
condition $g(t)=k_1(t)$ and positive initial datum. According to
Lemma~\ref{gl:vsp_lemma} there exists positive constant $V$ such
that $v(x,t)\leq V$ for any $x\in\Omega$ and $t\geq0$.

To prove the theorem we construct a supersolution
of~(\ref{v:u})--(\ref{v:n}) in the following form $\overline
u(x,t)=af(t)v(x,t),$ where
    \begin{equation*}
       f(t)=\left(A-(p-1)a^{p-1}V^{p-1}\int_0^t c_1(t)\,dt\right)^{-1/(p-1)},
    \end{equation*}
$A=1+(p-1)a^{p-1}V^{p-1}\int_0^\infty c_1(t)\,dt$ and $a$ is some positive constant.

After simple computations it follows that
    \begin{eqnarray*}
        \overline u_t-\Delta \overline u-c(x,t)\overline u^p&=&af'(t)v+afv_t-af\Delta v-a^pc(x,t)f^pv^p\\
        &\geq&av(f'(t)-a^{p-1}V^{p-1}c_1(t)f^p)=0,\;x\in\Omega,\;t>0,
    \end{eqnarray*}
and
    \begin{equation*}
\frac{\partial\overline u}{\partial\nu}-\int_\Omega
k(x,y,t)\overline u^l(y,t)\,dy\geq
af(t)k_1(t)(1-a^{l-1}f^{l-1}V^l|\Omega|)\geq0,\;x\in\partial\Omega,\;t>0,
    \end{equation*}
for small  values of $a.$ Hence, $\overline u(x,t)$ is a
supersolution of problem~(\ref{v:u})--(\ref{v:n}) for an initial
datum $u_0(x)\leq aA^{-1/(p-1)}v(x,0).$
\end{proof}
\begin{remark}
By Remark~\ref{gl:blow-up} and Theorem~\ref{gl:theorem:l>1_p>1}
condition~(\ref{gl:c(x,t)}) is optimal for global existence of
solutions for~(\ref{v:u})--(\ref{v:n}) with $c(x,t) = c(t)$ and
$k(x,y,t) = k(t).$ Arguing in the same way as in the proof of
Lemma~\ref{gl:vsp_lemma} it is easy to show
that~(\ref{gl:k(x,y,t)}) is optimal for the existence of bounded
global solutions for~(\ref{v:u})--(\ref{v:n}) with
$k(x,y,t)=k(t).$
 \end{remark}
\begin{remark}
Assume that $\min(p,l)>1$,~(\ref{gl:c(x,t)}) holds and there exist
positive constants $\alpha,\;t_0$ and $K$ such that $\alpha>t_0$
and for some $q>2$ the inequality
        \begin{equation*}
\int\limits_{t-t_0}^tk_1^q(\tau)\,d\tau\leq K
        \end{equation*}
holds for any $ t\geq\alpha.$ Then by Remark~\ref{4:zam} and
Theorem~\ref{gl:theorem:l>1_p>1} problem~(\ref{v:u})--(\ref{v:n})
has bounded global solutions for small initial data.
\end{remark}

\subsection{The case $p=1$ and $l>1$}\label{4}

Suppose that for some $K \geq 0$ and $\varepsilon>0$ the functions
$k(x,y,t)$ and $c_1(t)$ satisfy
\begin{equation}\label{gl:k(x,y,t)2}
    \int_\Omega k(x,y,t)\,dy\leq K\exp \left[-(l-1) \left\{\int\limits_0^tc_1(\tau)\,d\tau+\varepsilon t\right\}\right]
\end{equation}
for any $x\in\partial\Omega$ and $t \geq 0.$
\begin{theorem}\label{4:th}
     Let $p=1,\;l>1$ and~(\ref{gl:k(x,y,t)2}) hold. Then problem~(\ref{v:u})--(\ref{v:n}) has global solutions for small initial data.
\end{theorem}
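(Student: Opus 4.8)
The plan is to mimic the construction in the proof of Theorem~\ref{gl:theorem:l>1_p>1}, building an explicit supersolution of the product form $\overline u(x,t)=af(t)v(x,t)$, where $v$ solves the auxiliary linear problem~(\ref{gl:vsp-problem}) with a suitably chosen flux $g(t)$, and $a>0$ is a small parameter. The essential new feature is that now $p=1$, so the reaction term $c(x,t)\overline u^{p}$ is \emph{linear}; this suggests replacing the algebraic factor used in the case $p>1$ by the exponential integrating factor $f(t)=\exp[\int_0^t c_1(\tau)\,d\tau]$, which is finite for each finite $t$ (since $c_1$ is continuous) and satisfies $f(0)=1$, $f'(t)=c_1(t)f(t)$. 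Note that since the theorem only claims global, not bounded, solutions, we are free to let $f$ grow.

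First I would fix the auxiliary flux to be $g(t)=\exp[-(l-1)\varepsilon t]$, with $\varepsilon$ the constant from~(\ref{gl:k(x,y,t)2}), together with a positive initial datum $v_0$ compatible with this flux. This $g$ is continuous, nonnegative, summable on $[0,\infty)$, and trivially satisfies~(\ref{gl:2g}) because it is bounded; hence Lemma~\ref{gl:vsp_lemma} furnishes a bounded solution, $0\le v(x,t)\le V$ for all $x$ and $t$, with $V$ independent of $a$. With $v$ and $f$ in hand I would verify the three inequalities of Definition~\ref{v:sup}. Using $v_t=\Delta v$, the interior defect reduces to $af(t)v(x,t)\,(c_1(t)-c(x,t))\ge0$, which holds by the definition of $c_1(t)=\sup_\Omega c(x,t)$.

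The crux is the boundary inequality. Using $\partial v/\partial\nu=g(t)$, the bound $v\le V$, and hypothesis~(\ref{gl:k(x,y,t)2}), the boundary defect is at least
\begin{equation*}
af(t)g(t)-a^{l}f^{l}(t)V^{l}K\exp\left[-(l-1)\left\{\int_0^t c_1(\tau)\,d\tau+\varepsilon t\right\}\right].
\end{equation*}
The key algebraic cancellation is $f^{l}(t)\exp[-(l-1)\int_0^t c_1(\tau)\,d\tau]=f(t)$, which is exactly why the decay built into~(\ref{gl:k(x,y,t)2}) is tailored to the growth of $f^{l}$: after substituting $g$ and cancelling, the expression becomes $af(t)\exp[-(l-1)\varepsilon t]\,(1-a^{l-1}V^{l}K)$, which is nonnegative as soon as $a^{l-1}V^{l}K\le1$, i.e. for all sufficiently small $a$. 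Finally, since $f(0)=1$, the initial inequality $\overline u(x,0)=av(x,0)\ge u_0(x)$ holds whenever $u_0$ is small, e.g. $u_0(x)\le a v(x,0)$. Thus $\overline u$ is a supersolution on every $Q_T$; being finite for each finite $T$, the comparison principle (Theorem~\ref{p:theorem:comp-prins}, applicable with no positivity hypothesis since $\min(p,l)=1$) together with the continuation principle yields a global solution.

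I expect the only genuine subtlety, rather than a real obstacle, to be organizing the constants to avoid circularity: one must fix $g$, and hence $V$, \emph{independently} of $a$ before shrinking $a$ to absorb the factor $a^{l-1}V^{l}K$. The matching of the two exponentials through the identity $f^{l}\exp[-(l-1)\int_0^t c_1\,d\tau]=f$ is the heart of the argument and is dictated precisely by the form of~(\ref{gl:k(x,y,t)2}); the residual factor $\exp[-(l-1)\varepsilon t]$ is what keeps the auxiliary flux summable so that Lemma~\ref{gl:vsp_lemma} applies.
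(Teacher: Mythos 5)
Your proof is correct, but it takes a genuinely different route from the paper's. The paper proves Theorem~\ref{4:th} with a fully explicit separated supersolution $b\exp[f(t)]\psi(x)$, $f(t)=\int_0^t c_1(\tau)\,d\tau+\varepsilon t$, where $\psi$ solves the elliptic problem $\Delta\psi=1$ in $\Omega$, $\partial\psi/\partial\nu=|\Omega|/|\partial\Omega|$ on $\partial\Omega$; there the $\varepsilon$ in~(\ref{gl:k(x,y,t)2}) is spent on absorbing the term $-v/\psi$ produced by $\Delta\psi=1$ (one normalizes $\psi$ so that $\inf_\Omega\psi\geq1/\varepsilon$), and the cancellation of the decay $\exp[-(l-1)f(t)]$ against the growth $\exp[lf(t)]$ on the boundary leaves only the smallness condition $|\Omega|/|\partial\Omega|\geq Kb^{l-1}\sup_\Omega\psi^l$. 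You instead transplant the product ansatz of Theorems~\ref{gl:theorem:l>1_p>1} and~\ref{gl:th:2}, taking $\overline u=af(t)v(x,t)$ with $f(t)=\exp[\int_0^t c_1(\tau)\,d\tau]$ and spending the $\varepsilon$ on making the auxiliary flux $g(t)=\exp[-(l-1)\varepsilon t]$ integrable, so that Lemma~\ref{gl:vsp_lemma} yields a bounded $v\leq V$; your exponent bookkeeping $f^l\exp[-(l-1)\int_0^t c_1(\tau)\,d\tau]=f$ is the exact analogue of the paper's cancellation, and you order the constants correctly ($g$, hence $V$, fixed before shrinking $a$). The comparison-principle remark (no positivity needed since $\min(p,l)=1$) is also right. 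The paper's construction is more elementary --- it needs no Green's function estimates --- while yours is uniform with the rest of Section~\ref{mm} and gives the marginally sharper upper bound $u\leq aV\exp[\int_0^t c_1(\tau)\,d\tau]$, without the extra factor $e^{\varepsilon t}$. Both arguments are valid.
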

\begin{proof}
    Let $\psi(x)$ be a solution of the following problem
    \begin{equation*}
\Delta\psi=1,\;x\in\Omega,\;\frac{\partial\psi(x)}{\partial
\nu}=\gamma,\;x\in\partial\Omega,
    \end{equation*}
where $\gamma=|\Omega|/|\partial\Omega|.$ To prove the theorem we
construct a supersolution of~(\ref{v:u})--(\ref{v:n}) in such a
form that $v(x,t)=b\exp[f(t)]\psi(x)$, where
    $f(t)=\int\limits_0^tc_1(\tau)\,d\tau+\varepsilon t$
and $b$ is some positive constant. Indeed, we have
    \begin{equation*}
v_t-\Delta
v-c(x,t)v=(c_1(t)+\varepsilon)v-\frac{v}{\psi(x)}-c(x,t)v\geq
\left(\varepsilon-\frac{1}{\psi(x)}\right)v\geq0,\;x\in\Omega,\;t>0,
    \end{equation*}
for large values of $\inf\limits_\Omega\psi(x)$ and
    \begin{eqnarray*}
        \frac{\partial v}{\partial\nu}&=&\gamma b\exp[f(t)]-b^l\exp[lf(t)]\sup\limits_{\partial\Omega}\int_\Omega k(x,y,t)\psi^l(y)\,dy\\
        &\geq&b\exp[f(t)]\left(\gamma-Kb^{l-1}\sup\limits_\Omega\psi^l(x)\right)\geq0,\;x\in\partial\Omega,\;t>0,
    \end{eqnarray*}
for small values of $b.$ Consequently, $v(x,t)$ is a supersolution
of~(\ref{v:u})--(\ref{v:n}) for an initial datum $u_0(x)\leq
b\psi(x).$
\end{proof}
\begin{remark}
It is easy to see from Remark~\ref{gl:blow-up} that
Theorem~\ref{4:th} does not hold for $\varepsilon=0.$
\end{remark}

Suppose that $k_1(t)$ and $c_1(t)$ satisfy
\begin{equation}\label{gl:k(x,y,t)3}
    \int_0^\infty k_1(t)\exp \left[ (l-1)\int_0^t c_1(\tau)\,d\tau \right]\,dt<\infty
\end{equation}
and there exist positive constants $\alpha,\;t_0$ and $K$ such
that $\alpha>t_0$ and
\begin{equation}\label{gl:k(x,y,t)4}
\int\limits_{t-t_0}^t\frac{k_1(\tau)\exp \left[ (l-1)\int_0^\tau
c_1(s)\,ds \right]}{\sqrt{t-\tau}}\,d\tau\leq K\textrm{ for any
}t\geq\alpha.
\end{equation}
\begin{theorem}\label{gl:th:2}
Let $p=1,\;l>1$ and~(\ref{gl:k(x,y,t)3}), (\ref{gl:k(x,y,t)4})
hold. Then problem~(\ref{v:u})--(\ref{v:n}) has bounded global
solutions for small initial data.
\end{theorem}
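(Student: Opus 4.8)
The plan is to prove the result by constructing an explicit supersolution of (\ref{v:u})--(\ref{v:n}) and invoking the comparison principle, Theorem~\ref{p:theorem:comp-prins}, exactly in the spirit of the proofs of Theorem~\ref{gl:theorem:l>1_p>1} and Theorem~\ref{4:th}. Since $p=1$ the interior reaction $c(x,t)u$ is \emph{linear}, so I expect to absorb it by a purely temporal factor $\exp\bigl[\int_0^t c_1(\tau)\,d\tau\bigr]$, which is precisely the $p\to1$ limit of the factor $f(t)$ employed in the case $\min(p,l)>1$. Since $l>1$ the boundary flux is \emph{superlinear}, so I would dominate it by the flux of the linear auxiliary problem (\ref{gl:vsp-problem}), whose solutions are controlled by Lemma~\ref{gl:vsp_lemma}.

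First I would fix the boundary datum of (\ref{gl:vsp-problem}) to be
\[
g(t)=k_1(t)\exp\Bigl[(l-1)\int_0^t c_1(\tau)\,d\tau\Bigr].
\]
The decisive observation is that, for this $g$, the hypotheses (\ref{gl:k(x,y,t)3}) and (\ref{gl:k(x,y,t)4}) are \emph{verbatim} the conditions (\ref{gl:1g}) and (\ref{gl:2g}) of Lemma~\ref{gl:vsp_lemma}. Hence the solution $v(x,t)$ of (\ref{gl:vsp-problem}) with this $g$ and a fixed positive datum $v_0$ obeys a uniform bound $v(x,t)\le V$ for all $x\in\Omega$ and $t\ge0$. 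I then set
\[
\overline u(x,t)=a\exp\Bigl[\int_0^t c_1(\tau)\,d\tau\Bigr]\,v(x,t),\qquad a>0\ \text{small}.
\]

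Next I would check the three supersolution inequalities. For the interior one, $v_t=\Delta v$ gives $\overline u_t-\Delta\overline u=c_1(t)\overline u$, whence $\overline u_t-\Delta\overline u-c(x,t)\overline u=\bigl(c_1(t)-c(x,t)\bigr)\overline u\ge0$ because $c(x,t)\le c_1(t)$. For the boundary one, $\partial\overline u/\partial\nu=a\exp[\int_0^t c_1]\,g(t)=a\,k_1(t)\exp[l\int_0^t c_1]$, while $\int_\Omega k(x,y,t)\overline u^l(y,t)\,dy\le a^l k_1(t)|\Omega|V^l\exp[l\int_0^t c_1]$; the common factor $k_1(t)\exp[l\int_0^t c_1]$ cancels and the inequality reduces to $a^{l-1}|\Omega|V^l\le1$, which holds for small $a$ since $l>1$. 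Finally $\overline u(x,0)=a\,v_0(x)\ge u_0(x)$ whenever $u_0(x)\le a\,v_0(x)$, i.e.\ for small initial data. Since $\min(p,l)=1$, Theorem~\ref{p:theorem:comp-prins} applies with no positivity requirement and yields $u\le\overline u$, so the solution exists globally.

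The hard part will be the \emph{boundedness} assertion itself. The natural supersolution carries the factor $\exp\bigl[\int_0^t c_1\bigr]$, and the uniform bound $v\le V$ from Lemma~\ref{gl:vsp_lemma} controls only the remaining factor; promoting global existence to a genuine uniform-in-time bound on $u$ is therefore the delicate step, and it is exactly here that the integral hypotheses (\ref{gl:k(x,y,t)3})--(\ref{gl:k(x,y,t)4}) are stronger than the pointwise decay (\ref{gl:k(x,y,t)2}) used in Theorem~\ref{4:th}. The remaining ingredients --- the regularity of $\overline u$, the elementary estimate $\int_\Omega k\,v^l\,dy\le k_1|\Omega|V^l$, and the reduction to $a^{l-1}|\Omega|V^l\le1$ --- are routine, so I would concentrate the effort on the correct choice of $g$ that simultaneously linearizes the boundary flux and matches the hypotheses of Lemma~\ref{gl:vsp_lemma}, and on extracting the boundedness of $u$ from the uniform control of $v$.
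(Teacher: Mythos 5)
Your construction is exactly the paper's proof: the same choice $g(t)=k_1(t)\exp[(l-1)\int_0^t c_1(\tau)\,d\tau]$ that turns~(\ref{gl:k(x,y,t)3}), (\ref{gl:k(x,y,t)4}) into the hypotheses~(\ref{gl:1g}), (\ref{gl:2g}) of Lemma~\ref{gl:vsp_lemma}, the same supersolution $\overline u(x,t)=a\exp[\int_0^t c_1(\tau)\,d\tau]v(x,t)$, and the same reduction of the boundary inequality to $a^{l-1}V^l|\Omega|\leq1$ for small $a$. The ``hard part'' you flag --- that this supersolution is uniformly bounded in time only when $\int_0^\infty c_1(t)\,dt<\infty$ --- is not treated in the paper either (its proof stops at the supersolution property), so you have reproduced everything the paper actually does.
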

\begin{proof}
Let $v(x,t)$ be a solution of~(\ref{gl:vsp-problem}) with boundary
condition $g(t)=k_1(t)\exp[(l-1)\int_0^t c_1(\tau)\,d\tau]$ and
positive initial datum.  According to Lemma~\ref{gl:vsp_lemma}
there exists positive constant $V$ such that $v(x,t)\leq V$ for
any $x\in\Omega$ and $t \geq 0.$

To prove the theorem we construct a supersolution
of~(\ref{v:u})--(\ref{v:n}) in the following form
    \begin{equation*}
\overline u(x,t)=a \exp \left[\int_0^t c_1(\tau)\,d\tau \right]
v(x,t),
    \end{equation*}
where $a$ is some positive constant. It is easy to check that
\begin{eqnarray*}
&&\overline u_t-\Delta \overline u-c(x,t)\overline u=ac_1(t)v\exp
\left[\int_0^t c_1(\tau)\,d\tau \right]+a\exp \left[\int_0^t
c_1(\tau)\,d\tau \right]v_t\\
&&-a\exp\left[\int_0^t
c_1(\tau)\,d\tau\right]\Delta v-ac(x,t)\exp\left[\int_0^t
c_1(\tau)\,d\tau\right]v\geq0,\;x\in\Omega,\;t>0,
\end{eqnarray*}
and
    \begin{equation*}
\frac{\partial\overline u}{\partial\nu}-\int_\Omega
k(x,y,t)\overline u^l(y,t)\,dy\geq ak_1(t)\exp
\left[l\int_0^tc(\tau)\,d\tau\right]\left(1-a^{l-1}V^l|\Omega|\right)
\geq 0
    \end{equation*}
for $x\in\partial\Omega,\;t>0$ and some small  values of $a.$
Hence,  $\overline u(x,t)$ is a supersolution of
problem~(\ref{v:u})--(\ref{v:n}) for an initial datum $u_0(x)\leq
av(x,0).$
\end{proof}
\begin{remark}
By Remark~\ref{gl:blow-up} and Theorem~\ref{gl:th:2}
condition~(\ref{gl:k(x,y,t)3})  is optimal for global existence of
solutions for~(\ref{v:u})--(\ref{v:n}) with $c(x,t) = c(t)$ and
$k(x,y,t) = k(t).$ Arguing in the same way as in the proof of
Lemma~\ref{gl:vsp_lemma} it is easy to show that
(\ref{gl:k(x,y,t)4}) is optimal for the existence of bounded
global solutions for~(\ref{v:u})--(\ref{v:n}) with
$k(x,y,t)=k(t).$
\end{remark}
\begin{remark}
Assume $p=1,\;l>1$,~(\ref{gl:k(x,y,t)3}) holds and there exist
positive constants $\alpha,\;t_0$ and $K$ such that $\alpha>t_0$
and for some $q>2$ the inequality
    \begin{equation*}
\int\limits_{t-t_0}^tk_1^q(\tau)\exp \left[ q(l-1)\int_0^\tau
c_1(s)\,ds\right]\,d\tau\leq K
    \end{equation*}
holds for any $t\geq\alpha.$ Then by Remark~\ref{4:zam} and
Theorem~\ref{gl:th:2} problem~(\ref{v:u})--(\ref{v:n}) has bounded
global solutions for small initial data.
\end{remark}

\subsection{The case $l=1$ and $p>1$}\label{5}

Consider the auxiliary problem
    \begin{equation}\label{4:vz}
    \left\{
        \begin{array}{ll}
\Delta h(x)=ah(x),\;x\in\Omega,\\
\frac{\partial h(x)}{\partial \nu}=g(x)\int_\Omega
h(y)\,dy,\;x\in\partial\Omega.
        \end{array}
    \right.
    \end{equation}
With respect to the data of~(\ref{4:vz}) we suppose
\begin{equation*}
g(x)\in
C(\partial\Omega),\;g(x)\geq0,\,a=\int_{\partial\Omega}g(x)\,dS>0.
\end{equation*}
\begin{lemma}
Problem~(\ref{4:vz}) has infinitely many nonnegative solutions.
\end{lemma}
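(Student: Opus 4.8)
The plan is to reduce the nonlocal boundary condition to an ordinary inhomogeneous Neumann condition and then to exploit the specific normalization $a=\int_{\partial\Omega}g\,dS$. First I would introduce the auxiliary linear problem
\[
\Delta H = a H \text{ in } \Omega, \qquad \frac{\partial H}{\partial\nu} = g(x) \text{ on } \partial\Omega,
\]
in which the nonlocal factor $\int_\Omega h\,dy$ has been replaced by the constant $1$. Its weak formulation, $\int_\Omega(\nabla H\cdot\nabla v + aHv)\,dx = \int_{\partial\Omega} g v\,dS$ for all $v\in H^1(\Omega)$, involves the bilinear form $B(u,v)=\int_\Omega(\nabla u\cdot\nabla v + a u v)\,dx$, which is coercive on $H^1(\Omega)$ because $a>0$, while $v\mapsto\int_{\partial\Omega}gv\,dS$ is a bounded functional by the trace theorem and $g\in C(\partial\Omega)\subset L^\infty(\partial\Omega)$. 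Hence Lax--Milgram yields a unique $H\in H^1(\Omega)$, and since $\partial\Omega$ is smooth and $g$ continuous, elliptic regularity promotes $H$ to a classical solution in $C^2(\Omega)\cap C^1(\overline\Omega)$.

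The crucial step is the identity $\int_\Omega H\,dx=1$. Integrating $\Delta H=aH$ over $\Omega$ and using the divergence theorem together with the boundary condition gives $a\int_\Omega H\,dx=\int_{\partial\Omega}\partial H/\partial\nu\,dS=\int_{\partial\Omega}g\,dS=a$, so that $\int_\Omega H\,dx=1$ because $a>0$. Consequently $g(x)=g(x)\int_\Omega H\,dy$, which means that $H$ already satisfies the \emph{nonlocal} boundary condition of~(\ref{4:vz}); thus $H$ is a genuine solution. Moreover the problem is homogeneous of degree one: if $h$ solves~(\ref{4:vz}) then for every $m\geq0$ the function $mh$ satisfies $\Delta(mh)=a(mh)$ and $\partial(mh)/\partial\nu=m g\int_\Omega h\,dy=g\int_\Omega(mh)\,dy$. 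Since $\int_\Omega H\,dx=1$ forces $H\not\equiv0$, the family $\{mH:m\geq0\}$ consists of distinct functions, and this will produce the required infinitely many solutions once their nonnegativity is established.

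It remains to show $H\geq0$, and this is the step I expect to require the most care. I would argue by the maximum principle applied to the operator $\Delta-a$ with $a>0$. Let $\mu=\min_{\overline\Omega}H$ and suppose $\mu<0$. If the minimum were attained at an interior point $x_0$, then $\Delta H(x_0)\geq0$ while $\Delta H(x_0)=aH(x_0)=a\mu<0$, a contradiction. If it is attained only on $\partial\Omega$, set $w=H-\mu$; then $w\geq0$, $w>0$ in $\Omega$, and near a boundary minimizer $x_0$ one has $\Delta w=aw+a\mu<0$ (because $w$ is small there and $a\mu<0$), so $w$ is superharmonic in a one-sided neighborhood of $x_0$ with strict interior positivity. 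Hopf's lemma then gives $\partial w/\partial\nu(x_0)<0$, contradicting $\partial H/\partial\nu(x_0)=g(x_0)\geq0$. (The constant function is excluded since $\Delta H=aH$ with $a>0$ would force $H\equiv0$, incompatible with $\int_\Omega H=1$.) Hence $\mu\geq0$ and $H\geq0$, so every $mH$ with $m\geq0$ is a nonnegative solution of~(\ref{4:vz}). The only genuinely delicate point is justifying Hopf's lemma when $g$ merely vanishes at $x_0$; this is handled by the observation that $\Delta w$ is strictly negative near $x_0$, which suffices for the classical Hopf estimate under the interior sphere condition guaranteed by the smoothness of $\partial\Omega$.
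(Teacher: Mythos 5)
Your proposal is correct and follows essentially the same route as the paper: reduce the nonlocal problem to the linear Neumann problem $\Delta H=aH$, $\partial H/\partial\nu=g$, observe that integrating the equation forces $\int_\Omega H\,dx=1$ so the nonlocal condition is automatically satisfied, obtain nonnegativity from the strong maximum principle plus Hopf's lemma at a hypothetical negative boundary minimum, and generate infinitely many solutions by positive scalar multiples. The only difference is that you establish existence for the linear Neumann problem via Lax--Milgram and elliptic regularity, whereas the paper simply cites Miranda for the unique solvability of that problem.
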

\begin{proof}
Let $\int_\Omega h(y)\,dy \neq 0.$  Set $v(x)=h(x)/\int_\Omega
h(y)\,dy.$ It is easy to verify that~(\ref{4:vz}) is reduced to
the following problem
    \begin{equation}\label{4:vz2}
    \left\{
        \begin{array}{ll}
        \Delta v(x)=av(x),\;x\in\Omega,\\
        \frac{\partial v(x)}{\partial \nu}=g(x),\;x\in\partial\Omega,
        \end{array}
    \right.
    \end{equation}
provided
     \begin{equation}\label{4:sogl}
        \int_\Omega v(y)\,dy=1.
     \end{equation}
By~\cite{Miranda} problem~(\ref{4:vz2}) has unique solution.
Obviously, this solution satisfies~(\ref{4:sogl}). Now we show
nonnegativity of $v(x)$ in $\Omega.$ Indeed, by the strong maximum
principle $v(x)$ cannot attain a negative minimum in $\Omega$.
Suppose there exists a point $x_0\in\partial\Omega$ such that
$v(x_0)=\min\limits_{\overline\Omega}v(x)<0$. Then $\partial
v(x_0)/\partial\nu<0$ (see~\cite{Friedman}) which contradicts the
boundary condition. Obviously, $h(x)=\alpha v(x)$  is nonnegative
solution of~(\ref{4:vz}) for any $\alpha>0.$
\end{proof}
Suppose that $k(x,y,t)$ and  $c_1(t)$ satisfy the following conditions:
\begin{equation}\label{4:k(x,y,t)}
k(x,y,t)\leq k_2(x), \, x \in \partial \Omega, \, y \in \Omega,\,t>0,
\end{equation}
and
\begin{equation}\label{4:c(x,t)}
   \int_0^\infty c_1(t)\exp \left[(p-1)t \int_{\partial\Omega}
   k_2(x)\,dS\right]\,dt<\infty,
\end{equation}
where $k_2(x)$ is some nonnegative continuous function on $\partial \Omega.$

\begin{theorem}
Let $l=1,\;p>1$ and~(\ref{4:k(x,y,t)}), (\ref{4:c(x,t)}) hold.
Then problem~(\ref{v:u})--(\ref{v:n}) has global solutions for small initial data.
\end{theorem}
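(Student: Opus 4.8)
The plan is to mimic the construction used in the previous small‑data global existence theorems: produce an explicit supersolution $\overline u(x,t)=F(t)h(x)$ of (\ref{v:u})--(\ref{v:n}) that is finite on every finite time interval, and then conclude global existence by comparison (Theorem~\ref{p:theorem:comp-prins}) together with the local existence and continuation principle from~\cite{GladkovKavitova}. The natural spatial profile is the function $h$ supplied by the preceding lemma applied with $g(x)=k_2(x)$: since $k_2\geq0$ is continuous and (assuming $k_2\not\equiv0$) $a:=\int_{\partial\Omega}k_2(x)\,dS>0$, there is a nonnegative $h$ solving (\ref{4:vz}), i.e. $\Delta h=ah$ in $\Omega$ and $\partial h/\partial\nu=k_2(x)\int_\Omega h\,dy$ on $\partial\Omega$. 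Set $H=\sup_\Omega h<\infty$; elliptic regularity makes $h$ smooth enough for $\overline u$ to lie in the required class.

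The boundary inequality is where the choice $g=k_2$ pays off and requires no effort. Because $l=1$, one gets
\[
\frac{\partial\overline u}{\partial\nu}-\int_\Omega k(x,y,t)\overline u(y,t)\,dy=F(t)\left(k_2(x)\int_\Omega h(y)\,dy-\int_\Omega k(x,y,t)h(y)\,dy\right),
\]
which is nonnegative for $F\geq0$, $h\geq0$ by hypothesis (\ref{4:k(x,y,t)}). For the interior inequality, using $\Delta h=ah$, $c\leq c_1(t)$ and $h\leq H$, it suffices to choose $F>0$ with $F'(t)\geq aF(t)+c_1(t)H^{p-1}F^p(t)$. I would determine $F$ by solving the associated Bernoulli equation with equality: the substitution $w=F^{1-p}$ linearizes it and yields, for a small parameter $b>0$,
\[
F(t)=e^{at}\left(b^{-(p-1)}-(p-1)H^{p-1}\int_0^t c_1(s)e^{(p-1)as}\,ds\right)^{-1/(p-1)},
\]
so that $\overline u(x,0)=bh(x)$ and the differential inequality holds by construction; the initial condition (\ref{v:sup^n}) then reads $u_0(x)\leq bh(x)$, i.e. small data.

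The main (and really the only) point is to guarantee that $F$ is defined for all $t\geq0$, which is exactly what (\ref{4:c(x,t)}) delivers: the exponent $(p-1)a$ appearing in the Bernoulli solution coincides with $(p-1)\int_{\partial\Omega}k_2\,dS$ in (\ref{4:c(x,t)}), so $I:=\int_0^\infty c_1(s)e^{(p-1)as}\,ds<\infty$. Choosing $b$ small enough that $b^{-(p-1)}>(p-1)H^{p-1}I$ keeps the base positive for all $t$, whence $F(t)$ is finite on every $[0,T]$. Thus $\overline u$ is a genuine supersolution on each $Q_T$; since $\min(p,l)=1$ is not less than $1$, Theorem~\ref{p:theorem:comp-prins} applies with no positivity hypothesis and gives $u\leq\overline u$, and the resulting finite bound on every finite interval rules out finite‑time blow‑up, so the solution is global. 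I note that $F(t)\to\infty$ as $t\to\infty$, which is consistent with the statement asserting only global (not bounded global) solutions; the degenerate case $k_2\equiv0$ would have to be recorded separately.
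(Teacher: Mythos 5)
Your proposal is correct and follows essentially the same route as the paper: the same profile $h$ from the auxiliary problem (\ref{4:vz}) with $g=k_2$, the same separated supersolution $F(t)h(x)$ with $F$ obtained from the Bernoulli equation $F'=aF+c_1(t)H^{p-1}F^p$, the same use of (\ref{4:k(x,y,t)}) for the boundary inequality, and the same role of (\ref{4:c(x,t)}) in keeping the base positive for all time (your constant $b^{-(p-1)}$ plays exactly the role of the paper's $A$). The only cosmetic differences are your explicit remarks on the comparison step and on the degenerate case $k_2\equiv 0$, which the paper leaves implicit.
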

\begin{proof}
Let $h(x)$ be some nonnegative solution of~(\ref{4:vz}) with
$a=\int_{\partial\Omega} k_2(x)\,dS$ and $g(x)=k_2(x).$  To prove
the theorem we construct a supersolution
of~(\ref{v:u})--(\ref{v:n}) in such a form that $\overline
u(x,t)=f(t)h(x),$ where
    \begin{equation*}
        f(t)=\exp[at]\left(A-(p-1)\sup_\Omega h^{p-1}(x)\int_0^t c_1(\tau)\exp[(p-1)a\tau]\,d\tau\right)^{-1/(p-1)},
    \end{equation*}
$A=1+(p-1)\sup\limits_\Omega h^{p-1}(x)\int_0^\infty
c_1(t)\exp[(p-1)at]\,dt$. Indeed, we have
    \begin{eqnarray*}
\overline u_t - \Delta \overline u - c(x,t)\overline u^p&=&f'(t)h - a f h - c(x,t)f^ph^p\\
&\geq&h(f'(t) - a f - \sup_\Omega
h^{p-1}(x)c_1(t)f^p)=0,\;x\in\Omega,\;t>0,
    \end{eqnarray*}
and
    \begin{equation*}
\frac{\partial\overline u}{\partial\nu}-\int_\Omega
k(x,y,t)\overline u(y,t)\,dy = f(t)\left(k_2 (x) \int_\Omega
h(y)\,dy-\int_\Omega k(x,y,t)h(y)\,dy\right)\geq 0
    \end{equation*}
for $x\in\partial\Omega,\;t>0$. Hence, $\overline u(x,t)$ is a
supersolution of~(\ref{v:u})--(\ref{v:n}) for an initial datum
$u_0(x)\leq A^{-1/(p-1)}h(x).$
\end{proof}
\begin{remark}
It is easy to see that (\ref{4:c(x,t)}) and~(\ref{gl:c02-blow})
are optimal conditions for global existence and blow-up of
solutions for~(\ref{v:u})--(\ref{v:n}) if, for example,
$c(x,t)=c(t)$ and $k(x,y,t)=k(x).$
\end{remark}
\section{Blow-up on the boundary}\label{blow}

In this section we show that for problem~(\ref{v:u})--(\ref{v:n})
in the case $l>1$ and $p \leq 1$ blow-up  cannot occur at the
interior domain. We introduce the following notation
\begin{equation}\label{blow:J}
    J(t)=\int_0^t\int_\Omega u^l(x,\tau)\,dx\,d\tau.
\end{equation}
\begin{lemma}\label{blow:111}
Let $l>1$, $\inf\limits_{\partial\Omega\times Q_T}k(x,y,t)>0$ and the
solution $u(x,t)$ of (\ref{v:u})--(\ref{v:n}) blows up in $t=T$.
Then for $t \in [0,T)$
    \begin{equation}\label{blow:inq}
        J(t)\leq s \left( T-t \right)^{-1/(l-1)},\;s>0.
    \end{equation}
\end{lemma}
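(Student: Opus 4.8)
The plan is to show that the single scalar quantity $J(t)$ obeys an autonomous differential inequality of the form $J'(t)\ge CJ^l(t)$ for a constant $C>0$, and then to integrate this inequality ``backwards'' from the blow-up time $T$. Since $u$ is continuous, differentiating (\ref{blow:J}) gives $J'(t)=\int_\Omega u^l(x,t)\,dx$, and by Theorem~\ref{p:theorem:positive} the solution is positive, so $J(t)>0$ for $t>0$ and $J$ is strictly increasing. Everything therefore reduces to bounding $\int_\Omega u^l\,dx$ from below by a power of $J$ itself.

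To produce that control I would reintroduce $w(t)=\int_\Omega u(x,t)\,dx$ and exploit the mass balance already recorded in (\ref{gl:eq}). Discarding the nonnegative interior term $\int_\Omega c(x,t)u^p\,dx$ (here $c\ge0$ and $u\ge0$) and using the hypothesis $k_*:=\inf_{\partial\Omega\times Q_T}k(x,y,t)>0$, the boundary term is bounded below by $k_*\int_{\partial\Omega}\int_\Omega u^l(y,t)\,dy\,dS_x=k_*|\partial\Omega|\int_\Omega u^l\,dy$, so that $w'(t)\ge\kappa J'(t)$ with $\kappa=k_*|\partial\Omega|$. Integrating over $(0,t)$ and using $J(0)=0$ and $w(0)\ge0$ yields $w(t)\ge\kappa J(t)$. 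On the other hand, Jensen's inequality applied to the convex map $\sigma\mapsto\sigma^l$ (recall $l>1$) gives $J'(t)=\int_\Omega u^l\,dx\ge|\Omega|^{1-l}w^l(t)$. Combining the two estimates produces $J'(t)\ge|\Omega|^{1-l}\kappa^l J^l(t)=:CJ^l(t)$ on $(0,T)$, which is the desired autonomous inequality.

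Finally I would integrate it. Writing $\frac{d}{dt}\bigl[\frac{1}{l-1}J^{1-l}(t)\bigr]=-J^{-l}(t)J'(t)\le -C$ and integrating from $t$ to an arbitrary $\tau$ with $t<\tau<T$ gives $\frac{1}{l-1}J^{1-l}(\tau)\le\frac{1}{l-1}J^{1-l}(t)-C(\tau-t)$. Because $J^{1-l}(\tau)\ge0$, the left side is nonnegative, so $C(\tau-t)\le\frac{1}{l-1}J^{1-l}(t)$ for every $\tau<T$; letting $\tau\to T^-$ gives $C(T-t)\le\frac{1}{l-1}J^{1-l}(t)$, that is $J(t)\le\bigl[(l-1)C\bigr]^{-1/(l-1)}(T-t)^{-1/(l-1)}$, which is (\ref{blow:inq}) with $s=\bigl[(l-1)C\bigr]^{-1/(l-1)}$. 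The delicate point, and the main thing to get right, is that this last step uses \emph{only} the nonnegativity of $J^{1-l}(\tau)$ together with the fact that the solution, and hence the differential inequality, persists throughout $(0,T)$; I do not need to establish separately that $J(\tau)\to\infty$, which is precisely what turns the estimate into a clean upper bound on the blow-up rate rather than a mere lower bound. Thus the substantive work lies entirely in assembling the chain $J'\ge|\Omega|^{1-l}w^l\ge CJ^l$ from the boundary flux and Jensen's inequality; once that inequality is in hand, the backward integration and the limit $\tau\to T$ are routine bookkeeping.
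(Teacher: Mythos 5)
Your proof is correct, but it takes a genuinely different and more elementary route than the paper's. The paper works from the Green's-function representation (\ref{blow:equat}) of the solution: it drops the two nonnegative interior terms, applies Jensen's inequality to the $l$-th power of the remaining boundary term, and invokes the lower bound $\int_{\partial\Omega}G_N(x,\xi;t-\tau)\,dS_\xi\geq c_5$ from (\ref{gl:4G_N}) to arrive at $J'(t)\geq(c_5k)^l|\Omega|\,J^l(t)$. You bypass the Green function entirely: from the mass-balance identity (\ref{gl:eq}) you get $w'(t)\geq k_*|\partial\Omega|\,J'(t)$, integrate to obtain $w(t)\geq\kappa J(t)$, and then apply Jensen to $\int_\Omega u^l\,dx$ to close the loop $J'\geq|\Omega|^{1-l}\kappa^l J^l$. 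Both arguments land on the same autonomous inequality $J'\geq CJ^l$ (with different constants, which is immaterial), and your backward integration --- using only $J^{1-l}(\tau)\geq0$ rather than $J(\tau)\to\infty$ --- is exactly what the paper must do implicitly when it ``integrates (\ref{blow:1}) over $(t,T)$.'' Your version has the advantage of relying only on material already established in Section~\ref{mm} and on none of the kernel estimates (\ref{gl:1G_N})--(\ref{gl:4G_N}); the paper's version keeps everything inside the integral-equation framework it needs anyway for the subsequent theorem on boundary blow-up. The one point worth tightening is the positivity of $J$ on the interval where you divide by $J^l$ (Theorem~\ref{p:theorem:positive} assumes $u_0\not\equiv0$), but since (\ref{blow:inq}) holds trivially wherever $J(t)=0$ and $J$ is nondecreasing, this is cosmetic.
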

    \begin{proof}
It is well known that $u(x,t)$ is the solution
of~(\ref{v:u})--(\ref{v:n}) in $Q_T$ if and only if
    \begin{eqnarray}\label{blow:equat}
        u(x,t)&=&\int_\Omega{G_N(x,y;t)u_0(y)}\,dy+\int_0^t{\int_\Omega{G_N(x,y;t-\tau)c(y,\tau)u^p(y,\tau)}\,dy}\,d\tau\nonumber\\
        &&+\int_0^t{\int_{\partial\Omega}{G_N(x,\xi;t-\tau)\int_{\Omega}{k(\xi,y,\tau)u^l(y,\tau)}\,dy}}\,dS_\xi\,d\tau.
    \end{eqnarray}
By virtue of~(\ref{gl:1G_N}),
(\ref{gl:4G_N}), (\ref{blow:J}), (\ref{blow:equat}) and Jensen's inequality we have
\begin{eqnarray*}
    J'(t)&=&\int_\Omega u^l(x,t)\,dx\geq k^l\int_\Omega\left(\int_0^t{\int_{\partial\Omega}{G_N(x,\xi;t-\tau)\int_{\Omega}{u^l(y,\tau)}\,dy}}\,dS_\xi\,d\tau\right)^l\,dx\\
    &\geq&k^l|\Omega|^{1-l}\left(\int_\Omega\int_0^t{\int_{\partial\Omega}{G_N(x,\xi;t-\tau)\int_{\Omega}{u^l(y,\tau)}\,dy}}\,dS_\xi\,d\tau\,dx\right)^l\\
    &\geq&(c_5k)^l|\Omega|J^l(t),
\end{eqnarray*}
where $k=\inf\limits_{\partial\Omega\times Q_T}k(x,y,t).$ Thus,
\begin{equation}\label{blow:1}
    J'(t)\geq c_7J^l(t).
\end{equation}
Integrating~(\ref{blow:1}) over $(t,T),$ we
obtain~(\ref{blow:inq}).
\end{proof}
\begin{theorem}
Let $p\leq1$ and the conditions of Lemma~\ref{blow:111} hold. Then
blow-up can occur only on the boundary.
\end{theorem}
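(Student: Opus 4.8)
The plan is to prove that $u$ stays bounded on $\Omega_0\times[0,T)$ for every subdomain $\Omega_0$ with $\overline{\Omega_0}\subset\Omega$; since $\Omega_0$ is arbitrary, this is exactly the assertion that the solution can become unbounded only as $x\to\partial\Omega$. Fix such an $\Omega_0$ and put $d=\mathrm{dist}(\Omega_0,\partial\Omega)>0$. The starting point is the integral identity~(\ref{blow:equat}). Because $c(x,t)$ and $k(x,y,t)$ are continuous, they are bounded by a constant $M$ on $\overline\Omega\times[0,T]$ and on $\partial\Omega\times\overline\Omega\times[0,T]$, and since the solution blows up only at $t=T$ it is bounded on $\overline\Omega\times[0,T']$ for each $T'<T$; hence it suffices to bound $u(x,t)$ for $x\in\Omega_0$ and $t$ close to $T$.

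The device I would use for the reaction term exploits the hypothesis $p\le1$. Since $u^p\le1+u$ for $u\ge0$ when $0<p\le1$, we have $cu^p-Mu\le M(1+u)-Mu=M$. Setting $\tilde u=e^{-Mt}u$, one finds that $\tilde u$ solves $\tilde u_t=\Delta\tilde u+\tilde f$ with source $\tilde f=e^{-Mt}(cu^p-Mu)\le M$ and with Neumann flux $\tilde g=e^{-Mt}\int_\Omega k u^l\,dy$. Writing the Duhamel representation for $\tilde u$ (the analogue of~(\ref{blow:equat}) with $f,g$ replaced by $\tilde f,\tilde g$) and using $G_N\ge0$ together with $\int_\Omega G_N(x,y;t-\tau)\,dy=1$ from~(\ref{gl:1G_N}), (\ref{gl:2G_N}), the initial‑datum term is at most $\sup_\Omega u_0$ and the source term is at most $M\int_0^t\int_\Omega G_N\,dy\,d\tau\le MT$. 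Thus the only term needing care is the boundary integral, which is dominated by $M\int_0^t\Psi(x,t-\tau)J'(\tau)\,d\tau$, where $\Psi(x,s)=\int_{\partial\Omega}G_N(x,\xi;s)\,dS_\xi$ and $J'(\tau)=\int_\Omega u^l\,dy$.

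To estimate the boundary integral I would split $\int_0^t=\int_0^{t-\varepsilon}+\int_{t-\varepsilon}^t$. On $[0,t-\varepsilon]$ the kernel is uniformly bounded by~(\ref{gl:22G_N}), so the integral is controlled by $C\,J(t-\varepsilon)$, and~(\ref{blow:inq}) (with its constant denoted $s_0$) gives $J(t-\varepsilon)\le s_0(T-t+\varepsilon)^{-1/(l-1)}\le s_0\varepsilon^{-1/(l-1)}$, bounded uniformly in $t<T$. On $[t-\varepsilon,t]$ I would use that $x$ is interior, so $|x-\xi|\ge d$ for all $\xi\in\partial\Omega$; the Gaussian off‑diagonal bound for the Neumann kernel then yields $\Psi(x,s)\le\phi(s)$ with $\phi(s)=C s^{-n/2}\exp[-d^2/(Cs)]$, and crucially $\phi(s)\to0$ as $s\to0^+$. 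Integrating by parts, $\int_{t-\varepsilon}^t\phi(t-\tau)J'(\tau)\,d\tau=-\phi(\varepsilon)J(t-\varepsilon)+\int_{t-\varepsilon}^t\phi'(t-\tau)J(\tau)\,d\tau$, where the boundary contribution at $\tau=t$ vanishes since $\phi(0^+)=0$ while $J(t)$ is finite. Substituting~(\ref{blow:inq}) in the form $J(t-s)\le s_0(T-t+s)^{-1/(l-1)}\le s_0 s^{-1/(l-1)}$ reduces the remaining integral to $s_0\int_0^\varepsilon|\phi'(s)|\,s^{-1/(l-1)}\,ds$, which converges because the exponential factor in $\phi'$ dominates the power singularity. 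Collecting these bounds gives a bound on $\tilde u(x,t)$, hence on $u(x,t)=e^{Mt}\tilde u(x,t)$, uniform over $\Omega_0\times[0,T)$.

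The step I expect to be the main obstacle is the treatment of the boundary integral near $\tau=t$. A merely uniform bound on $\Psi(x,s)$ is useless there, since $J(t)$ itself diverges as $t\to T$; what saves the estimate is the fast decay $\Psi(x,s)\to0$ as $s\to0$ valid for interior $x$, which requires the Gaussian off‑diagonal estimate for $G_N$ (not included among~(\ref{gl:1G_N})--(\ref{gl:4G_N}) but standard for the Neumann heat kernel, cf.\ the references used in Lemma~\ref{gl:vsp_lemma}) and its interplay with the blow‑up rate~(\ref{blow:inq}) through integration by parts. One must also verify that $\int_0^\varepsilon|\phi'(s)|\,s^{-1/(l-1)}\,ds<\infty$ and that every constant is independent of $t$, so that the bound survives the limit $t\to T$.
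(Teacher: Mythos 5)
Your argument is correct, but it reaches the conclusion by a genuinely different route than the paper. The paper's proof also starts from the integral representation~(\ref{blow:equat}) and the estimate~(\ref{blow:inq}) of Lemma~\ref{blow:111}, but it only extracts from the integral equation the polynomial rate $\sup_{\Omega'}u(x,t)\leq c_{10}(T-t)^{-1/(l-1)}$ on an interior subdomain $\Omega'$, using merely the crude uniform bound~(\ref{blow:G_N}) on $G_N$ for separated interior/boundary points; it then needs a second step to upgrade this rate to boundedness, namely the Hu--Yin auxiliary function $f$ of~(\ref{blow:pr2}) and the barrier $w=c_{12}\exp[\mu t]\left(f(x)+c_{11}(T-t)\right)^{-1/(l-1)}$, compared with $u$ on $\Omega'\times[0,T)$ via the comparison principle (this is where $p\leq1$ enters for the paper, in checking that $w$ is a supersolution). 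You instead obtain boundedness in one pass from the integral equation: you absorb the reaction term using $u^p\leq1+u$ together with the exponential rescaling (the analogue of the paper's substitution $u=e^{ct}v$), and you control the boundary potential by combining the \emph{decay} of $\int_{\partial\Omega}G_N(x,\xi;s)\,dS_\xi$ as $s\to0^+$ for interior $x$ with an integration by parts that trades $J'$ for $J$ and then invokes~(\ref{blow:inq}). This bypasses the barrier construction entirely, which is a real simplification of the second half of the argument; the price is that your estimate genuinely requires the Gaussian off-diagonal upper bound for the Neumann heat kernel, which is stronger than any of the properties~(\ref{gl:1G_N})--(\ref{gl:4G_N}), (\ref{blow:G_N}) the paper records (a uniform bound on the kernel would not suffice, as you correctly observe, since $J(t)$ itself diverges). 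That bound is standard for smooth bounded domains and can be sourced from the same references the paper cites for the kernel estimates, but it should be stated and cited explicitly; with that, your splitting of the time integral, the vanishing of the boundary term at $\tau=t$, the bound $J(t-s)\leq s_0\,s^{-1/(l-1)}$, and the convergence of $\int_0^{\varepsilon}|\phi'(s)|\,s^{-1/(l-1)}\,ds$ all check out, and every constant is indeed independent of $t<T$.
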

\begin{proof}
In the proof we shall use some arguments of~\cite{Deng_Zhao},
\cite{{Hu_Yin}}. Let $u(x,t)=\exp[ct]v(x,t)$, where $c=\sup\limits_{Q_T}
c(x,t).$ It is easy to check that $v(x,t)$ is a solution of
    \begin{equation*}\label{blow:problem}
    \left\{
        \begin{array}{ll}
        v_t=\Delta v+\exp[-(1-p)ct]c(x,t)v^p-cv,\;(x,t)\in Q_T,\\
        \frac{\partial v(x,t)}{\partial \nu}=\exp[(l-1)ct]\int_\Omega k(x,y,t)v^l(y,t)\,dy,\;(x,t)\in S_T,\\
        v(x,0)=u_0(x),\;x\in\Omega.
        \end{array}
    \right.
    \end{equation*}
Then $v(x,t)$ satisfies the following equation
    \begin{eqnarray}\label{blow:eq}
        v(x,t)&=&\int_\Omega{G_N(x,y;t)u_0(y)}\,dy\\
        &&+\int_0^t{\int_\Omega{G_N(x,y;t-\tau)\left(\exp[-(1-p)ct]c(y,\tau)v^p(y,\tau)-cv(y,\tau)\right)}\,dy}\,d\tau\nonumber\\
        &&+\int_0^t{\int_{\partial\Omega}{G_N(x,\xi;t-\tau)\int_{\Omega}{\exp[(l-1)c\tau]k(\xi,y,\tau)u^l(y,\tau)}\,dy}}\,dS_\xi\,d\tau\nonumber
    \end{eqnarray}
for $(x,t) \in Q_T.$ We now take an arbitrary
$\Omega'\subset\subset\Omega$ with $\partial\Omega'\in C^2$ such
that $\textrm{dist}(\partial\Omega,\Omega')=\varepsilon>0.$ It is
known (see, for example,~\cite{Hu_Yin1}) that
    \begin{equation}\label{blow:G_N}
    0\leq G_N(x,y;t-\tau)\leq
    c_\varepsilon,\;x\in\Omega',\;y\in\partial\Omega,\;0<\tau<t<T,
    \end{equation}
where $c_\varepsilon$ is a positive constant depending on
$\varepsilon.$ By~(\ref{gl:1G_N}), (\ref{gl:2G_N}),
(\ref{blow:inq}), (\ref{blow:eq}) and (\ref{blow:G_N}) we have
    \begin{eqnarray*}
        \sup_{\Omega'}v(x,t)&\leq&\sup_\Omega u_0(x)+c\int_0^t{\int_\Omega{G_N(x,y;t-\tau)}\,dy}\,d\tau\\
        &&+ c_\varepsilon\sup_{\partial\Omega\times Q_T}k(x,y,t)|\partial\Omega|\exp[(l-1)cT]\int_0^t{\int_{\Omega}{u^l(y,\tau)}\,dy}\,d\tau\\
        &\leq& c_7+c_8J(t)\leq c_9(T-t)^{-1/(l-1)}.
    \end{eqnarray*}
Hence,
    \begin{equation*}
        \sup_{\Omega'} u(x,t)\leq c_{10}(T-t)^{-1/(l-1)}.
    \end{equation*}
As it is shown in~\cite{Hu_Yin}, there exists a function $f(x)\in
C^2(\overline{\Omega'})$ satisfying
    \begin{equation}\label{blow:pr2}
        \Delta f -\frac{l}{l-1}\frac{|\nabla f|^2}{f}\geq-c_{11}\textrm{
        in }\Omega',\;  f(x)>0\textrm{ in }\Omega',\;f(x)=0\textrm{ on
        }\partial\Omega'.
    \end{equation}

We introduce the auxiliary function
    \begin{equation*}
    w(x,t)=c_{12}\exp[\mu t]\left(f(x)+c_{11}(T-t)\right)^{-1/(l-1)},
    \end{equation*}
where the positive constants $\mu$ and $c_{12}$ will be defined
below. By~(\ref{blow:pr2}) for $x\in\Omega'$ and $t\in[0,T)$ we
get
    \begin{eqnarray*}
        &&w_t-\Delta w-c(x,t)w^p=\mu w-c(x,t)w^p\\
        &&+\frac{w}{(l-1)[f(x)+c_{11}(T-t)]}\left(c_{11}+\Delta f-\frac{l|\nabla f|^2}{(l-1)[f(x)+c_{11}(T-t)]}\right) \geq 0
    \end{eqnarray*}
provided that
    \begin{equation*}
    \mu\geq c\left(\frac{[\sup_{\Omega'} f(x)+c_{11}T]^{1/(l-1)}}{c_{12}}\right)^{1-p}.
    \end{equation*}
Choosing $c_{12}$ such that $c_{12}>c^{1/(l-1)}_{11}c_{10}$ and
$w(x,0)\geq u(x,0)$ for $x\in\Omega',$ by comparison principle we
conclude
    \begin{equation*}
    u(x,t)\leq w(x,t)\textrm{ in }{\overline{\Omega'}}\times[0,T).
    \end{equation*}
Hence, $u(x,t)$ cannot blow up in $\Omega'\times[0,T]$. Since
$\Omega'$ is an arbitrary subset of $\Omega$, the proof is
completed.
\end{proof}

\end{document}